\newtheorem{thm}{Theorem}[section]
\newtheorem{cor}[thm]{Corollary}
\newtheorem{lem}[thm]{Lemma}
\newtheorem{prop}[thm]{Proposition}
\newtheorem{defn}[thm]{Definition}
\newtheorem{rem}[thm]{\bf{Remark}}
\newtheorem{ex}[thm]{Example}
\numberwithin{equation}{section}
\def\pn{\par\noindent}
\begin{document}

\leftline{ \scriptsize \it Bulletin of the Iranian Mathematical
Society  Vol. {\bf\rm XX} No. X {\rm(}201X{\rm)}, pp XX-XX.}

\vspace{1.3 cm}

\title{Module approximate amenability of Banach algebras}
\author{Hasan Pourmahmood-Aghababa and Abasalt Bodaghi$^*$}

\thanks{{\scriptsize
\hskip -0.4 true cm MSC(2010): Primary: 43A07, 46H25; Secondary: 43A20, 46H20.
\newline Keywords: Approximately inner, inverse semigroup, module amenability, module derivation.\\
Received: 30 April 2009, Accepted: 21 June 2010.\\
$*$Corresponding author
\newline\indent{\scriptsize $\copyright$ 2011 Iranian Mathematical
Society}}}

\maketitle

\begin{center}
Communicated by\;
\end{center}

\begin{abstract}  In the present paper, the concepts of module (uniform) approximate
amenability and contractibility of Banach algebras that are modules over another Banach algebra, are introduced. The general theory is developed and some hereditary properties are given. In analogy with the Banach algebraic approximate amenability, it is shown that module approximate amenability and contractibility are the same properties. It is also shown that module uniform approximate (contractibility) amenability  and module (contractibility, respectively) amenability for commutative  Banach modules are equivalent. Applying these results to $\ell^1(S)$ as an
$\ell^1(E)$-module, for an inverse semigroup $S$ with the set of
idempotents $E$, it is shown that $\ell^1(S)$ is module
approximately amenable (contractible) if and only if it is module
uniformly approximately amenable if and only if $S$ is amenable.
Moreover, $\ell^1(S)^{**}$ is module (uniformly) approximately amenable if and only if a maximal group homomorphic image of $S$ is finite.
\end{abstract}

\vskip 0.2 true cm


\pagestyle{myheadings}
\markboth{\rightline {\scriptsize  Aghababa and Bodaghi}}
         {\leftline{\scriptsize Module approximate amenability of Banach algebras}}

\bigskip
\bigskip


\section{\bf Introduction}
\vskip 0.4 true cm

The concept of approximately amenable Banach algebras was initiated by Ghahramani and Loy in \cite{GhL}. They characterized the structure of approximately (contractible) amenable Banach algebras through several ways and showed that the group algebra
$L^1(G)$ of a locally compact group $G$ is approximately
(contractible) amenable if and only if $G$ is amenable. This fails
to be true for discrete semigroups. Indeed, Gheoregh and Zhang showed that for a bicyclic semigroup $\mathcal C$, the semigroup algebra $\ell^1(\mathcal C)$ is not approximately amenable \cite{Ghz}. On the other hand, for any discrete semigroup $S$, approximate amenability of $\ell^1(S)$ implies amenability of $S$ \cite{GhLZ} (the case of cancellative semigroups had been earlier proved by Bami and Samea
in \cite{bas}). There are similar results for small
categories in \cite{mpo}. Furthermore,  the approximate
amenability of $\ell^1(S)$ of a  Brandat semigroup $S$ over the
group $G$ is investigated in terms of amenability of $G$ in
\cite{bas}.

It is shown in \cite[Theorem 2.1]{GhLZ} that two concepts of
approximate amenability and approximate contractibility for
Banach algebras are equivalent. Some examples of approximately amenable, non-amenable Banach algebras are given in \cite{GhL} to show that two notions of approximate amenability and amenability do not coincide. Also, some examples of
semigroup algebras $\ell^1(S)$ which are approximately amenable
but not amenable are indicated in \cite{dls}. It is well known that every uniformly
approximately (contractible) amenable Banach algebra is (contractible, respectively) amenable and vice versa (\cite[Theorem 4.1]{GhL}) \cite[Theorem 3.1]{GhLZ}.

The concept of module amenability for a class of Banach algebras that are modules over another Banach algebra has been introduced by Amini in \cite{am1}. He showed that for an inverse semigroup $S$ with the set of idempotents $E$, the semigroup algebra $\ell^1(S)$ is module amenable, as a Banach module over $\ell^1(E)$, if and only if $S$ is amenable. In this paper, we define the notions of module (uniform, $w^*$-) approximate amenability and module (uniform) approximate contractibility for a Banach algebra $\mathcal A$ which is also a Banach
$\mathfrak A$-bimodule with compatible actions; see also \cite{YN} for the definition of module approximate amenability. Then we show that module approximate amenability and module approximate contractibility are the same properties, as are module
uniform approximate (contractibility) amenability and module (contractibility) amenability for those Banach algebras that are commutative as an $\mathfrak A$-bimodule. Some concrete examples of Banach algebras to show that module approximate amenability and module amenability are different notions are given. The fact that approximate amenability of the second dual $\mathcal A^{**}$ implies approximate amenability of $\mathcal A$ has been proved by Ghahramani
and Loy in \cite{GhL}. We prove the module version of this result.

Let $S$ be an inverse semigroup with the set of idempotents $E$.
We show that the semigroup algebra $\ell^1(S)$ is
$\ell^1(E)$-module approximately (contractible) amenable if and
only if $S$ is amenable. The same fact is proved in
\cite[Theorem 2]{YN} by using the amenability of the quotient
group $G_S=G/\sim$ of $S$, where $s\sim t$ whenever there exists
$e\in E$ such that $se=te$ (see \cite{dun}). This can be regarded
as the module version (for inverse semigroups) of a result of
Ghahramani and Loy \cite{GhL} asserting that for any locally
compact group $G$, the group algebra $L^1(G)$ is approximately
(contractible) amenable if and only if $G$ is amenable. It is
also shown that $\ell^1(S)^{**}$ is $\ell^1(E)$-module
(uniformly) approximately amenable if and only if an appropriate
group homomorphic image of $S$ is finite (the module version of
\cite[Theorem 3.3]{GhL} for discrete inverse semigroups).


\section{\bf {\bf \em{\bf Notation and Preliminary Results}}}
\vskip 0.4 true cm

We first recall some definitions in the Banach algebras setting.
Let $\mathcal A$ be a Banach algebra, and let $X$ be a Banach $\mathcal A$-bimodule. A bounded linear map $D: \mathcal A \longrightarrow X$ is called a {\it derivation} if
$$D(ab)=D(a)\cdot b+a\cdot D(b) \qquad (a,b \in \mathcal A).$$

For each $x\in X$, we define the map ${\rm{ad}}_x: \mathcal A \longrightarrow X$
by
$${\rm{ad}}_x(a)=a\cdot x-x\cdot a\qquad (a\in \mathcal A).$$

It is easy to see that ${\rm{ad}}_x$ is a derivation. Derivations of this form are
called {\it inner derivations}. A derivation $D: \mathcal A
\longrightarrow X$ is said to be {\it approximately inner} if there exists a net $(x_i)\subseteq X$ such that
$$D(a)=\lim_i(a\cdot x_i-x_i\cdot a)\qquad (a \in \mathcal A).$$

Hence $D$ is approximately inner if it is in the closure of the
set of inner derivations with respect to the strong operator
topology on $B(\mathcal A, X)$, the space of bounded linear operators from $\mathcal A$ to $X$. A Banach algebra $\mathcal A$ is {\it approximately amenable} ({\it contractible}) if every bounded derivation $D: \mathcal A \longrightarrow X^*$ ($D: \mathcal A \longrightarrow X$) is
approximately inner, for each Banach $\mathcal A$-bimodule $X$ \cite{GhL}, where $X^*$ denotes the first dual of $X$ which is a Banach $\mathcal A$-bimodule in the canonical way.

Let ${\mathcal A}$ and ${\mathfrak A}$ be Banach algebras such
that ${\mathcal A}$ is a Banach ${\mathfrak A}$-bimodule with
compatible actions as follows:
$$ \alpha\cdot(ab)=(\alpha\cdot a)b,
\quad(ab)\cdot\alpha=a(b\cdot\alpha) \qquad(a,b \in {\mathcal
A},\alpha\in {\mathfrak A}).$$

Let ${X}$ be a left Banach ${\mathcal A}$-module and a
Banach ${\mathfrak A}$-bimodule with the following compatible actions:
$$\alpha\cdot(a\cdot x)=(\alpha\cdot a)\cdot x,
\,\,a\cdot(\alpha\cdot x)=(a\cdot\alpha)\cdot x, \,\, a\cdot
(x\cdot\alpha)=(a\cdot x)\cdot \alpha,$$
 for all $a \in{\mathcal
A},\alpha\in {\mathfrak A}$ and $x\in{X}.$ Then we say that ${X}$ is a {\it{left Banach ${\mathcal A}$-${\mathfrak A}$-module}}.
Right Banach $\mathcal A$-$\mathfrak A$-modules and (two-sided) Banach $\mathcal A$-$\mathfrak A$-modules are defined similarly.
Moreover, if $\alpha\cdot x=x\cdot\alpha $ for all $\alpha\in
{\mathfrak A}$ and $x\in{ X}$, then $ X $ is called a {\it
commutative} left (right or two-sided) Banach ${\mathcal A}$-${\mathfrak A}$-module. If $X$ is a
(commutative) Banach ${\mathcal A}$-${\mathfrak A}$-module, then so
is $X^*$, where the actions of $\mathcal A$ and ${\mathfrak A}$
on $ X^*$ are defined as usual:
$$\langle
f\cdot\alpha,x\rangle{}=\langle{}f,\alpha\cdot
x\rangle{},\,\,\langle{} f\cdot a,x\rangle{}=\langle{}f,a\cdot
x\rangle,$$
$$\langle\alpha\cdot f,x\rangle{}=\langle{}f,x\cdot\alpha\rangle{},\,\,\langle{}
a\cdot f,x\rangle{}=\langle{}f,x\cdot a\rangle{}\quad (a
\in{\mathcal A},\alpha\in {\mathfrak A},x\in{ X},f \in X^* ).$$

 Note that in general, ${\mathcal A}$ is not an ${\mathcal A}$-${\mathfrak
A}$-module because ${\mathcal A}$ does not satisfy the
compatibility condition $a\cdot(\alpha\cdot
b)=(a\cdot\alpha)\cdot b$ for $\alpha\in {\mathfrak A}, a,b
\in{\mathcal A}$. But if $\mathcal A$ is a commutative
$\mathfrak A$-module and acts on itself by multiplication from
both sides, then it is also a Banach ${\mathcal A}$-${\mathfrak
A}$-module.

Let ${\mathcal A}$ and ${\mathfrak A}$ be as above
and $X$ be a Banach ${\mathcal A}$-$\mathfrak A$-module. A ($\mathfrak A$-){\it
module derivation} is a bounded map $D: \mathcal A \longrightarrow X $
satisfying
$$D(a\pm b)=D(a)\pm D(b), \quad D(ab)=D(a)\cdot b+a\cdot D(b) \qquad
(a,b \in \mathcal A),$$ and
$$D(\alpha\cdot a)=\alpha\cdot D(a), \quad D(a\cdot\alpha)=D(a)\cdot\alpha
\qquad (a \in{\mathcal A},\alpha\in {\mathfrak A}).$$ 

Note that $D: \mathcal A \longrightarrow X $ is bounded if
there exists $M>0$ such that $\| D(a) \| \leq M \| a \| $, for
each $a \in\mathcal A$. Although $D$ is not necessarily linear,
but still its boundedness implies its norm continuity (since $D$
preserves subtraction). Throughout this paper, all module derivations are assumed to be bounded. When $X $ is a commutative $\mathfrak A$-bimodule, each
$x \in X $ defines a module derivation
$$D_x(a)=a\cdot x-x\cdot a \qquad (a \in{\mathcal A}).$$

These are called {\it inner} module derivations.

\begin{defn}\label{def1}  Let $\mathcal A$ be a Banach algebra and an ${\mathfrak
A}$-bimodule with compatible actions. Then
\begin{enumerate}
\item[(i)] {$\mathcal A$ is {\it module approximately amenable} \emph{(}as an ${\mathfrak
A}$-module\emph{)} if for any commutative Banach ${\mathcal
A}$-${\mathfrak A}$-module $X$, each module derivation $D:
\mathcal A \longrightarrow X^*$ is approximately inner;}
\item[(ii)] {$\mathcal A$ is {\it module approximately contractible} \emph{(}as an ${\mathfrak
A}$-module\emph{)} if for any commutative Banach ${\mathcal
A}$-${\mathfrak A}$-module $X$, each module derivation $D:
\mathcal A \longrightarrow X$ is approximately inner.}
\end{enumerate}
\end{defn}
We will use the qualifier {\it uniform} when that convergence of
the net is uniform over the unit ball of $\mathcal A$, and similarly $w^*$ when that convergence is in the appropriate
weak$^*$-topology.

Recall that a left Banach $\mathcal A$-module $X$ is called {\it{left essential}} if the linear span of $\mathcal A \cdot X=\{a\cdot x : a\in \mathcal A, \, x\in X\}$ is dense in $X$. Right essential $\mathcal A$-modules and two-sided essential $\mathcal A$-bimodules are defined similarly.

We remark that if $\mathcal A$ is a left (right) essential
$\mathfrak{A}$-module, then every $\mathfrak A$-module derivation is also
a derivation, in fact, it is linear. For if $a\in \mathcal A$, there is a sequence $(F_n)\subseteq \mathfrak A\cdot \mathcal A$ such that
$\lim_n F_n=a$. Assume that $F_n=\sum_{m=1}^{K_n}
\alpha_{n,m}\cdot a_{n,m}$ for some finite sequences
$(\alpha_{n,m})_{m=1}^{m=K_n}\subseteq \mathfrak{A}$ and
$(a_{n,m})_{m=1}^{m=K_n}\subseteq \mathcal A$. Then for each $\lambda\in
\mathbb{C}$,
\begin{eqnarray*} 
D(\lambda F_n) \!\! \! & = \!\! \! & D\Big(\lambda \sum_{m=1}^{K_n} \alpha_{n,m} \cdot a_{n,m}\Big)= \sum_{m=1}^{K_n} D\big((\lambda\alpha_{n,m}) \cdot a_{n,m}\big) \vspace{0.2cm} \\ & = \!\! \! & \sum_{m=1}^{K_n} (\lambda\alpha_{n,m}) \cdot D(a_{n,m}) =  \sum_{m=1}^{K_n} \lambda D(\alpha_{n,m} \cdot a_{n,m})=  \lambda D(F_n), 
\end{eqnarray*}
and so, by the continuity of $D$, $D(\lambda a)=\lambda D(a)$, if
we assume that $\mathcal A$ is a left essential $\mathfrak{A}$-module.

The following result is the approximate version of \cite[Proposition 2.1]{am1}.

\begin{prop} \label{pro1}
If $\mathcal A$ is approximately amenable and if it is essential as one-sided $\mathfrak A$-module, then it is module approximately contractible.
\end{prop}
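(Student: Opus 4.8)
The plan is to reduce the module statement to the ordinary (non-module) one and then invoke the equivalence of approximate amenability and approximate contractibility for Banach algebras. Fix a commutative Banach $\mathcal A$-$\mathfrak A$-module $X$ and a module derivation $D:\mathcal A\longrightarrow X$. The first step is to recognise $D$ as an ordinary bounded derivation. By hypothesis $\mathcal A$ is essential as a one-sided $\mathfrak A$-module, so the remark preceding the statement applies and guarantees that $D$ is in fact linear; since $D$ already satisfies the Leibniz identity $D(ab)=D(a)\cdot b+a\cdot D(b)$ and is bounded, it is a bounded derivation from $\mathcal A$ into the Banach $\mathcal A$-bimodule $X$ obtained by forgetting the $\mathfrak A$-action.

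The second step is to produce the approximating net. Because $\mathcal A$ is approximately amenable, it is also approximately contractible by \cite[Theorem 2.1]{GhLZ}; hence every bounded derivation into a Banach $\mathcal A$-bimodule, and in particular the $D$ above viewed as a map into $X$, is approximately inner. Thus there is a net $(x_i)\subseteq X$ with $D(a)=\lim_i(a\cdot x_i-x_i\cdot a)$ for every $a\in\mathcal A$.

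Finally, I would observe that this net already witnesses module approximate innerness. Indeed, for each $x\in X$ the inner derivation $\mathrm{ad}_x$ and the inner module derivation $D_x$ are literally the same map, $D_x(a)=a\cdot x-x\cdot a=\mathrm{ad}_x(a)$, since the $\mathfrak A$-action plays no role in their definition, and each $D_x$ is a genuine module derivation as $X$ is a commutative $\mathfrak A$-bimodule. Consequently $D=\lim_i D_{x_i}$ in the strong operator topology, so $D$ is an approximately inner module derivation. As $X$ and $D$ were arbitrary, $\mathcal A$ is module approximately contractible.

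I expect no serious obstacle here: the only point that requires care is the passage from a module derivation to a genuine (linear) derivation, which is precisely what the essentiality hypothesis secures through the preceding remark. Everything else is bookkeeping, since the inner maps and the notion of approximate innerness coincide in the module and non-module settings, and the transition from derivations into $X^*$ (approximate amenability) to derivations into $X$ (approximate contractibility) is exactly \cite[Theorem 2.1]{GhLZ}.
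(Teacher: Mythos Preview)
Your proof is correct and follows essentially the same route as the paper: use essentiality to promote the module derivation to a genuine linear derivation, then invoke \cite[Theorem 2.1]{GhLZ} to pass from approximate amenability to approximate contractibility and conclude that $D$ is approximately inner. The only difference is that you spell out explicitly why the approximating inner derivations $\mathrm{ad}_{x_i}$ coincide with the inner module derivations $D_{x_i}$, a point the paper leaves implicit.
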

\begin{proof}
Let $\mathcal A$ be an essential Banach left (or right) $\mathfrak{A}$-module, $X$ be a commutative Banach ${\mathcal A}$-${\mathfrak A}$-module, and $D:\mathcal A \longrightarrow X$ be a module derivation. By the above discussion, $D$ is a linear derivation. Since every approximately amenable Banach algebra is approximately contractible (\cite[Theorem 2.1]{GhLZ}), we conclude that $D$ is approximately inner.
\end{proof}

 As we will see in Section 3, there are module
approximately (contractible) amenable Banach algebras that are not
approximately amenable which shows that the converse of
above proposition does not hold in general.

Let $X\widehat{\otimes}Y$ denote the projective tensor product of two Banach spaces $X$ and $Y$. Now consider the module projective tensor product ${\mathcal
A}\widehat{\otimes} _{\mathfrak A} {\mathcal A}$ which is isomorphic
to the quotient space $(\mathcal A \widehat{\otimes} \mathcal
A)/{I_{\mathcal A}}$, where $I_{\mathcal A}$ is the closed linear span of $\{ a\cdot\alpha \otimes b-a
\otimes\alpha \cdot b : \alpha\in {\mathfrak A},a,b\in{\mathcal A}\}$; \cite{rie}. Also consider the closed ideal
$J_{\mathcal A}$ of ${\mathcal A}$ generated by elements of the form $
(a\cdot\alpha) b-a(\alpha \cdot b)$ for $ \alpha\in {\mathfrak
A},a,b\in{\mathcal A}$. We shall denote $I_{\mathcal A}$ and
$J_{\mathcal A}$ by $I$ and $J$, respectively, if there is no risk of confusion. 
Then $I$ is an ${\mathcal A}$-submodule and an ${\mathfrak A}$-submodule of
$\mathcal A \widehat{\otimes} \mathcal A$, $J$ is an ${\mathcal A}$-submodule and an ${\mathfrak A}$-submodule of $\mathcal A$, and both of the quotients $\mathcal A \widehat{\otimes}_{\mathfrak A} \mathcal A$ and $\mathcal A/J$ are
${\mathcal A}$-modules and ${\mathfrak A}$-modules. Also,
$\mathcal A/J$ is a Banach $\mathcal A$-${\mathfrak A}$-module
 when ${\mathcal A}$ acts on ${\mathcal
A}/J$ canonically. Let also $\omega_{\mathcal{A}}:
\mathcal{A}\widehat{\otimes}\mathcal{A}\longrightarrow \mathcal{A}$ be the product map, i.e., $\omega_{\mathcal{A}}(a\otimes b)=ab$, and let
$\tilde{\omega}_{\mathcal{A}} : \mathcal{A}\widehat{\otimes}_{\mathfrak{A}}\mathcal{A} =
{(\mathcal{A}\widehat{\otimes}\mathcal{A}})/{{I}}\longrightarrow
{\mathcal{A}}/{{J}}$ be its induced product map, i.e.,
$\tilde{\omega}_{\mathcal{A}}(a\otimes
b+{I})=ab+{J}$.

We denote by $\square$ the first Arens product on $\mathcal{A}^{**}$, the second dual of $\mathcal{A}$. From now on, we assume that $\mathcal A^{**}$ is equipped with the first Arens product. The canonical images of $a\in\mathcal{A}$ and $\mathcal{A}$ in $\mathcal{A}^{**}$ will be denoted by $\hat{a}$ and $\hat{\mathcal{A}}$, respectively.


\section{\bf {\bf \em{\bf Relations between notions of module approximate amenability}}}
\vskip 0.4 true cm


Let $\mathfrak A$ be a non-unital Banach algebra. Then $\mathfrak A^\#=
\mathfrak A\oplus\mathbb{C}$, the unitization of $\mathfrak A$, is a
unital Banach algebra which contains $\mathfrak A$ as a closed ideal. Let $\mathcal A$  be a Banach algebra and a Banach $\mathfrak A$-bimodule with compatible actions. Then $\mathcal A$ is a Banach algebra and a Banach $\mathfrak A^{\#}$-bimodule with compatible actions in the obvious way, i.e., the actions of $\mathfrak A^\#$ on $\mathcal A$ are as follows:
$$(\alpha, \lambda)\cdot a=\alpha \cdot a+\lambda a, \ \  a\cdot(\alpha, \lambda)=a\cdot\alpha+\lambda a \qquad (\lambda\in \mathbb{C}, \, \alpha\in \mathfrak A, \, a\in \mathcal A).$$

Let $\mathcal A$ be a Banach algebra and a Banach $\mathfrak A$-bimodule with compatible actions and let $\mathcal B=(\mathcal A\oplus \mathfrak A^\#, \bullet)$, where the multiplication $\bullet$ is defined through
$$(a, u)\bullet (b, v)=(ab+av+ub, uv) \qquad (a,b\in \mathcal A, \, u,v\in \mathfrak A^\#).$$

Consider the module actions of $\mathfrak A^\#$ on $\mathcal B$ as follows:
$$u\cdot(a,v)=(u\cdot a, uv), \  \ (a,v)\cdot u=(a\cdot u, vu) \qquad (a\in \mathcal A, \, u,v\in \mathfrak A^\#).$$

Then $\mathcal B$ is a unital Banach algebra and a Banach $\mathfrak A^\#$-bimodule with compatible actions.

\begin{thm} \label{t1}
Let $\mathcal A$ be a Banach algebra and a Banach $\mathfrak A$-bimodule with compatible actions. Then the following are equivalent:
\begin{enumerate}
\item[(i)] {$\mathcal A$ is $\mathfrak A^\#$-module \emph{((}$w^*$-\emph{)} approximately\emph{)} amenable;}
\item[(ii)] {$\mathcal B$ is $\mathfrak A^\#$-module \emph{((}$w^*$-\emph{)} approximately\emph{)} amenable.}
\end{enumerate}
If, in addition, $\mathcal A$ is a left or right essential $\mathfrak A$-module, then \emph{(i)} and \emph{(ii)} are equivalent to
\begin{enumerate}
\item[(iii)] {$\mathcal A$ is $\mathfrak A$-module \emph{((}$w^*$-\emph{)} approximately\emph{)} amenable.}
\end{enumerate}
\end{thm}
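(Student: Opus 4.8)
The plan is to establish the two nontrivial equivalences (i)\,$\Leftrightarrow$\,(ii) and (i)\,$\Leftrightarrow$\,(iii) separately, running the three variants (approximate, uniform approximate, and $w^*$-approximate) in parallel, since each construction I use---extending a module, extending a derivation, and restricting an approximating net---respects both the $w^*$-topology and uniformity over the unit ball. First I would record two structural facts. The algebra $\mathcal B$ is unital with unit $e=(0,1)$, and $\mathcal A\cong\mathcal A\oplus 0$ is a closed ideal of $\mathcal B$ with $\mathcal B/\mathcal A\cong\mathfrak A^\#$. Moreover, if $Y$ is a commutative Banach $\mathcal B$-$\mathfrak A^\#$-module that is unital over $\mathcal B$ (that is, $e\cdot y=y\cdot e=y$), then the compatibility axiom $b\cdot(u\cdot y)=(b\cdot u)\cdot y$ with $b=e$ gives $u\cdot y=e\cdot(u\cdot y)=(e\cdot u)\cdot y=(0,u)\cdot y$, and symmetrically $y\cdot(0,u)=y\cdot u$; hence on such $Y$ the $\mathcal B$-action of $(0,u)$ agrees with the $\mathfrak A^\#$-action of $u$, and dualizing together with the $\mathfrak A^\#$-commutativity of $Y$ yields $u\cdot f=f\cdot u$ for every $f\in Y^*$.

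For (ii)\,$\Rightarrow$\,(i) I would take a commutative $\mathcal A$-$\mathfrak A^\#$-module $X$ and a module derivation $D\colon\mathcal A\to X^*$, and promote $X$ to a commutative $\mathcal B$-$\mathfrak A^\#$-module via $(a,u)\cdot x=a\cdot x+u\cdot x$ and $x\cdot(a,u)=x\cdot a+x\cdot u$; the compatibility identities $\alpha\cdot(a\cdot x)=(\alpha\cdot a)\cdot x$ and $a\cdot(\alpha\cdot x)=(a\cdot\alpha)\cdot x$ are precisely what makes this $\mathcal B$-action associative. Defining $\tilde D\colon\mathcal B\to X^*$ by $\tilde D(a,u)=D(a)$, a short computation with the cross terms of $\bullet$, using $D(a\cdot\alpha)=D(a)\cdot\alpha$ and $D(\alpha\cdot a)=\alpha\cdot D(a)$, shows $\tilde D$ is an $\mathfrak A^\#$-module derivation. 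Then (ii) gives a net $(f_i)\subseteq X^*$ with $\tilde D=\lim_i\mathrm{ad}_{f_i}$, and restricting to $(a,0)$ yields $D(a)=\lim_i(a\cdot f_i-f_i\cdot a)$, so $D$ is approximately inner.

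For the converse (i)\,$\Rightarrow$\,(ii) I would start from a commutative $\mathcal B$-$\mathfrak A^\#$-module $Y$ and a module derivation $D\colon\mathcal B\to Y^*$, and reduce to the case that $Y$ is unital over the unital algebra $\mathcal B$ by the classical decomposition into the corners $e\cdot Y\cdot e$, etc. On a unital $Y$ one has $e\cdot f=f\cdot e=f$ on $Y^*$, so $D(e)=D(e\bullet e)=e\cdot D(e)+D(e)\cdot e=2\,D(e)$ forces $D(e)=0$; since $(0,u)=u\cdot e$ and $D$ is $\mathfrak A^\#$-linear, $D(0,u)=u\cdot D(e)=0$, and thus $D$ annihilates the $\mathfrak A^\#$-summand and $D(a,u)=D(a,0)$. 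Applying (i) to $D|_{\mathcal A}$, regarded as a derivation into $Y^*$ with its restricted $\mathcal A$-$\mathfrak A^\#$-structure, produces a net $(f_i)\subseteq Y^*$ with $(a,0)\cdot f_i-f_i\cdot(a,0)\to D(a,0)$. Finally the first paragraph gives $\mathrm{ad}_{f_i}(0,u)=u\cdot f_i-f_i\cdot u=0=D(0,u)$ for every $i$, so additivity yields $\mathrm{ad}_{f_i}(a,u)=\mathrm{ad}_{f_i}(a,0)\to D(a,0)=D(a,u)$, and $D$ is approximately inner.

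For (i)\,$\Leftrightarrow$\,(iii), under the standing hypothesis that $\mathcal A$ is essential as a one-sided $\mathfrak A$-module, I would invoke the discussion before Proposition \ref{pro1}: essentiality forces every $\mathfrak A$-module derivation to be $\mathbb C$-linear, so for $(\alpha,\lambda)\in\mathfrak A^\#$ one computes $D((\alpha,\lambda)\cdot a)=\alpha\cdot D(a)+\lambda D(a)=(\alpha,\lambda)\cdot D(a)$; hence the $\mathfrak A$-module and $\mathfrak A^\#$-module derivations into a fixed dual module coincide. As a commutative $\mathcal A$-$\mathfrak A$-module is the same datum as a commutative $\mathcal A$-$\mathfrak A^\#$-module on which the unit of $\mathfrak A^\#$ acts as the identity, the relevant classes of (module, derivation) pairs are identical, giving (i)\,$\Leftrightarrow$\,(iii). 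I expect the main obstacle to be the reduction to a $\mathcal B$-unital module $Y$ in (i)\,$\Rightarrow$\,(ii): one must check that the corner decomposition of $Y$ is compatible with the merely additive, $\mathfrak A^\#$-linear nature of module derivations and preserves $\mathfrak A^\#$-commutativity, after which the identity $(0,u)\cdot y=u\cdot y$ makes the ideal $\mathcal A$ and the summand $\mathfrak A^\#$ decouple and the argument closes.
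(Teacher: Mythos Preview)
Your proposal is correct and follows essentially the same route as the paper: for (ii)$\Rightarrow$(i) you promote an $\mathcal A$-$\mathfrak A^\#$-module to a $\mathcal B$-$\mathfrak A^\#$-module and extend $D$ by $\tilde D(a,u)=D(a)$, and for (i)$\Rightarrow$(ii) you kill $D$ on the $\mathfrak A^\#$-summand and apply (i) to the restriction $D|_{\mathcal A}$; the treatment of (i)$\Leftrightarrow$(iii) via essentiality and linearity is identical to the paper's.

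The one place you are more careful than the paper is exactly the point you flag as the potential obstacle. The paper simply writes ``for $u\in\mathfrak A^\#$, $D(u)=uD(1)=0$'' and then ``by (i), $D$ is inner,'' without reducing to a $\mathcal B$-unital module and without checking that the approximating inner derivations $\mathrm{ad}_{f_i}$ also vanish on $(0,u)$. Your reduction to a unital $Y$ via the corner decomposition, together with the observation that on such $Y$ the $\mathcal B$-action of $(0,u)$ coincides with the $\mathfrak A^\#$-action of $u$ (whence $\mathrm{ad}_{f_i}(0,u)=u\cdot f_i-f_i\cdot u=0$ by $\mathfrak A^\#$-commutativity of $Y^*$), supplies precisely the missing justification for both $D(e)=0$ and $\mathrm{ad}_{f_i}|_{\mathfrak A^\#}=0$. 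One small slip: the theorem statement covers the amenable, approximately amenable, and $w^*$-approximately amenable cases, not the uniform variant you mention in your first sentence; but since nothing in your constructions depends on that, the argument goes through unchanged for the three cases actually listed.
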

\begin{proof}
We prove the case of module amenability, other cases are similar.

(i)$\Longrightarrow$(ii) Let $X$ be a commutative Banach $\mathcal B$-$\mathfrak A^\#$-module and $D:\mathcal B \to X^*$ be an $\mathfrak A^\#$-module derivation. Then for $u\in \mathfrak A^\#$, $D(u)=uD(1)=0$ and thus $D$ reduces to a module derivation $D:\mathcal A \to X^*$. Since $X$ is also a commutative Banach $\mathcal A$-$\mathfrak A^\#$-module, by (i), $D$ is inner.

(ii)$\Longrightarrow$(i) Assume that $X$ is a commutative Banach $\mathcal A$-$\mathfrak A^\#$-module. Then $X$ is a commutative Banach $\mathcal B$-$\mathfrak A^\#$-module in the usual way. Now every $\mathfrak A^\#$-module derivation $D:\mathcal A \to X^*$ extends to an $\mathfrak A^\#$-module derivation $\tilde{D}:\mathcal B \to X^*$ defined by $\tilde{D}(a, u)=D(a)$ for all $a\in \mathcal A$, $u\in \mathfrak A^\#$. By the hypothesis, $\tilde{D}$ is inner and thus $D$ is inner. Whence, $\mathcal A$ is $\mathfrak A^{\#}$-module amenable.

Since every $\mathfrak A^\#$-module is also an $\mathfrak A$-module, (iii) implies (i) trivially.

(i)$\Longrightarrow$(iii) Let $X$ be a commutative Banach
$\mathcal A$-$\mathfrak A$-module and $D:\mathcal A \to X^*$ be an $\mathfrak A$-module
derivation. Then $X$ is a commutative Banach $\mathcal A$-$\mathfrak A^\#$-module (as usual). By the essentiality of $\mathcal A$ as a left (right) $\mathfrak A$-module, $D$ is linear and so 

\begin{align*}
D((\alpha, \lambda)\cdot a) &=D(\alpha\cdot
a+\lambda a)=D(\alpha\cdot a)+D(\lambda a)\\
&=\alpha\cdot
D(a)+\lambda D(a)=(\alpha, \lambda)\cdot D(a),
\end{align*}
for all $\lambda
\in \mathbb{C}, \, \alpha\in \mathfrak A, \, a\in \mathcal A$. Similarly, $D$ is
a right $\mathfrak A^\#$-module homomorphism and so $D$ is an $\mathfrak A^\#$-module
derivation, and by (i) is inner. Therefore, $\mathcal A$ is
$\mathfrak A$-module amenable.
\end{proof}

Let $\mathcal A$ be a Banach algebra and $X$ a Banach $\mathcal A$-bimodule.
One should remember that $X$ is $\mathcal A$-pseudo-unital if
$$X=\mathcal A\cdot X\cdot \mathcal A=\{a\cdot x\cdot b : a, b\in \mathcal A, \, x\in X\}.$$

We need the following lemma which is analogous to \cite[Lemma 2.1]{am1}. Since the proof is similar, it is omitted.

\begin{lem}\label{l1}
If $\mathcal A$ has a bounded approximate identity, then it is $\mathfrak A$-module \emph{(}$w^*$-\emph{)} approximately  amenable if and only if every $\mathfrak A$-module derivation $D:\mathcal A\to X^*$ is approximately inner for each $\mathcal A$-pseudo-unital Banach $\mathcal A$-$\mathfrak A$-module $X$.
\end{lem}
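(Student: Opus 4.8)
The plan is to run the classical Johnson reduction to pseudo-unital coefficient modules in the module framework of Amini \cite[Lemma 2.1]{am1}, replacing ``inner'' by ``approximately inner'' throughout and checking that every step is compatible with net convergence and, in the $w^*$ case, with the $w^*$-topology. The forward implication is immediate: if $\mathcal A$ is $\mathfrak A$-module ($w^*$-)approximately amenable, then by Definition \ref{def1} every module derivation into the dual of a commutative Banach $\mathcal A$-$\mathfrak A$-module is ($w^*$-)approximately inner, and the $\mathcal A$-pseudo-unital modules form a subclass of these, so the stated condition follows at once. Thus I would concentrate on the converse.

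For the converse, I would fix a commutative Banach $\mathcal A$-$\mathfrak A$-module $X$ and a module derivation $D\colon\mathcal A\to X^*$, and aim to produce a net $(\phi_i)\subseteq X^*$ with $D(a)=\lim_i(a\cdot\phi_i-\phi_i\cdot a)$ for every $a\in\mathcal A$, in the appropriate topology. Let $(e_\lambda)$ be a bounded approximate identity for $\mathcal A$, of bound $M$. First I would split off the non-essential part by setting $X_{\mathrm e}=\overline{\mathcal A\cdot X+X\cdot\mathcal A}$, a commutative Banach $\mathcal A$-$\mathfrak A$-submodule of $X$; since $a\cdot x,\,x\cdot a\in X_{\mathrm e}$ for all $a\in\mathcal A$ and $x\in X$, the quotient $X/X_{\mathrm e}$ carries the trivial $\mathcal A$-bimodule structure. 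Every module derivation into $(X/X_{\mathrm e})^*=X_{\mathrm e}^{\perp}$ therefore vanishes on $\overline{\mathcal A^{2}}=\mathcal A$, so this part of $X^*$ will contribute nothing. On the essential part, a standard application of Cohen's factorization theorem (available because $\mathcal A$ has a bounded approximate identity) lets me take $X_{\mathrm e}$ to be $\mathcal A$-pseudo-unital, the $\mathfrak A$-module structure surviving because the $\mathfrak A$-actions commute with, and are compatible with, those of $\mathcal A$.

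Next I would transport $D$ to the pseudo-unital part. Restriction of functionals $\rho\colon X^*\to X_{\mathrm e}^{*}$ is a morphism of $\mathcal A$-$\mathfrak A$-modules, so $D_1:=\rho\circ D$ is a module derivation into the dual of the pseudo-unital module $X_{\mathrm e}$; by hypothesis $D_1$ is ($w^*$-)approximately inner, implemented by some net $(\psi_i)\subseteq X_{\mathrm e}^{*}$. Choosing Hahn--Banach lifts $\hat\psi_i\in X^*$ of $\psi_i$, bounded uniformly in terms of $M$, the module derivation $D-\mathrm{ad}_{\hat\psi_i}$ agrees on $X_{\mathrm e}$ with $D_1-\mathrm{ad}_{\psi_i}$, which tends to $0$; so in the limit $D-\mathrm{ad}_{\hat\psi_i}$ is pushed into the annihilator $X_{\mathrm e}^{\perp}$, where, by the trivial-module observation above, it is negligible. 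This would deliver the required implementing net for $D$ and show that $\mathcal A$ is $\mathfrak A$-module ($w^*$-)approximately amenable.

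The step I expect to be the main obstacle is exactly this recombination. In Amini's exact setting one subtracts a single inner derivation and lands \emph{on the nose} in the trivial quotient, where a derivation must vanish; here only \emph{approximate} innerness is available on $X_{\mathrm e}^{*}$, so I would have to argue at the level of nets that the annihilator tail really converges to $0$ rather than merely being small on $X_{\mathrm e}$. Keeping this under control is what forces the use of the uniform bound $M$ on the lifts $\hat\psi_i$ (the same bound that keeps the convergence uniform on the unit ball in the \emph{uniform} variant), together with the fact that restriction, Hahn--Banach lifting and the module actions are all $w^*$-continuous, so that the $w^*$-approximate case goes through with no change.
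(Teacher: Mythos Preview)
The paper does not actually supply a proof here: it simply remarks that the argument is analogous to \cite[Lemma~2.1]{am1} and omits the details. So your plan---run the Johnson--Amini reduction to pseudo-unital coefficient modules and check that each step survives ``approximately''---is exactly the intended route.

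That said, your execution has a genuine gap. The submodule you write down, $X_{\mathrm e}=\overline{\mathcal A\cdot X+X\cdot\mathcal A}$, is \emph{not} $\mathcal A$-pseudo-unital in general, and Cohen's theorem does not make it so. For a counterexample, take $X$ with a nontrivial left $\mathcal A$-action and the zero right action: then $X_{\mathrm e}=\mathcal A\cdot X$ while $\mathcal A\cdot X_{\mathrm e}\cdot\mathcal A=\{0\}$. You are conflating two different submodules. With your $X_{\mathrm e}$ the quotient $X/X_{\mathrm e}$ is indeed a trivial bimodule, so derivations into $X_{\mathrm e}^{\perp}$ vanish on $\mathcal A^2=\mathcal A$; but then you cannot invoke the pseudo-unital hypothesis on $X_{\mathrm e}$. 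If instead you use the genuine pseudo-unital part $Y=\mathcal A\cdot X\cdot\mathcal A$, then $X/Y$ need not have trivial actions, so derivations into $Y^{\perp}$ are at best \emph{inner}, not zero, and the recombination step---which you built on the annihilator contribution being literally negligible---collapses.

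The standard remedy, and what the proof of \cite[Lemma~2.1]{am1} does, is a two-stage reduction, one side at a time: first pass from $X$ to $X_0=X\cdot\mathcal A$, then from $X_0$ to $\mathcal A\cdot X_0=\mathcal A\cdot X\cdot\mathcal A$. At each stage the relevant annihilator carries a trivial \emph{one-sided} $\mathcal A$-action, and a module derivation into such a dual module is \emph{exactly} inner, implemented by a $w^*$-cluster point of $(D(e_\lambda))$ built from the bounded approximate identity alone. The point for the approximate version is that these two inner corrections depend only on $D$ and the approximate identity, not on the net coming from the pseudo-unital hypothesis; you subtract them first, land over the pseudo-unital module, apply the hypothesis there, and add the two fixed inner pieces back. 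Done in this order the ``annihilator tail'' you rightly flag never appears as a moving net, and the ($w^*$-)convergence issues you mention do not arise.
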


\begin{thm} \label{t2}
Let $\mathcal A$ be a Banach algebra and a Banach $\mathfrak A$-bimodule with compatible actions. Let also $\mathcal B\widehat{\otimes}_{\mathfrak A^\#}\mathcal B$ be commutative as an $\mathfrak A^\#$-module. Then the following are equivalent:
\begin{enumerate}
\item[(i)] {$\mathcal B$ is $\mathfrak A^\#$-module \emph{(}$w^*$-\emph{)} approximately amenable;}
\item[(ii)] {There exists a net $(m_i)\subset (\mathcal B\widehat{\otimes}_{\mathfrak A^\#}\mathcal B)^{**}$ such that for all $b\in \mathcal B$, $b\cdot m_i-m_i\cdot b \to 0$ \emph{(}in the $w^*$-topology of $(\mathcal B\widehat{\otimes}_{\mathfrak A^\#}\mathcal B)^{**}$, respectively\emph{)} and $\tilde\omega_{\mathcal B}^{**}(m_i)=1$ for each $i$;}
\item[(iii)] {There exists a net $(m_i)\subset (\mathcal B\widehat{\otimes}_{\mathfrak A^\#}\mathcal B)^{**}$ such that for all $b\in \mathcal B$, $b\cdot m_i-m_i\cdot b \to 0$ and $\tilde\omega_{\mathcal B}^{**}(m_i)\to 1$ \emph{(}in the $w^*$-topology of $(\mathcal B\widehat{\otimes}_{\mathfrak A^\#}\mathcal B)^{**}$ and $(\mathcal B/J_{\mathcal B})^{**}$, respectively\emph{)}.}
\end{enumerate}
\end{thm}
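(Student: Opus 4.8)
The plan is to establish the standard cycle of implications (i) $\Rightarrow$ (ii) $\Rightarrow$ (iii) $\Rightarrow$ (i), adapting the classical "virtual/approximate diagonal" characterization of amenability to the module setting. Since $\mathcal B$ is unital with identity $1$, the element $1 \otimes 1 + I_{\mathcal B}$ lives in $\mathcal B \widehat\otimes_{\mathfrak A^\#} \mathcal B$ and satisfies $\tilde\omega_{\mathcal B}(1 \otimes 1 + I_{\mathcal B}) = 1 + J_{\mathcal B}$, which is the identity of $\mathcal B/J_{\mathcal B}$. The bridge between module approximate amenability and the existence of an approximate diagonal is the observation that the kernel of $\tilde\omega_{\mathcal B}$ (or rather the behavior of $\tilde\omega_{\mathcal B}^{**}$) governs whether the canonical module derivation into an appropriate dual module is approximately inner.

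First I would prove (i) $\Rightarrow$ (ii). The idea is to consider the short exact sequence induced by $\tilde\omega_{\mathcal B}^{**}$ and produce a module derivation whose approximate innerness yields the net $(m_i)$. Concretely, let $N = \ker \tilde\omega_{\mathcal B}^{**}$ inside $(\mathcal B \widehat\otimes_{\mathfrak A^\#} \mathcal B)^{**}$, and fix any element $m_0 \in (\mathcal B \widehat\otimes_{\mathfrak A^\#} \mathcal B)^{**}$ with $\tilde\omega_{\mathcal B}^{**}(m_0) = 1$ (using unitality). The map $b \mapsto b \cdot m_0 - m_0 \cdot b$ takes values in $N$ because $\tilde\omega_{\mathcal B}^{**}$ is a $\mathcal B$-bimodule morphism and $1$ is central in $\mathcal B/J_{\mathcal B}$; this is a module derivation from $\mathcal B$ into the dual module $N$ (here one uses the commutativity hypothesis on $\mathcal B \widehat\otimes_{\mathfrak A^\#} \mathcal B$ to ensure $N$ is a genuine commutative Banach $\mathcal B$-$\mathfrak A^\#$-module and that the derivation is a module derivation). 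By (i) this derivation is approximately inner, so there is a net $(n_i) \subset N$ with $b \cdot m_0 - m_0 \cdot b = \lim_i (b \cdot n_i - n_i \cdot b)$; setting $m_i = m_0 - n_i$ gives $b \cdot m_i - m_i \cdot b \to 0$ while $\tilde\omega_{\mathcal B}^{**}(m_i) = \tilde\omega_{\mathcal B}^{**}(m_0) = 1$, as required.

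The implication (ii) $\Rightarrow$ (iii) is immediate, since a net satisfying $\tilde\omega_{\mathcal B}^{**}(m_i) = 1$ certainly satisfies $\tilde\omega_{\mathcal B}^{**}(m_i) \to 1$. For (iii) $\Rightarrow$ (i), I would run the argument in reverse. Given any commutative Banach $\mathcal B$-$\mathfrak A^\#$-module $X$ and a module derivation $D : \mathcal B \to X^*$, the goal is to produce $(\xi_i) \subset X^*$ with $D(b) = \lim_i (b \cdot \xi_i - \xi_i \cdot b)$. By Lemma \ref{l1} one may assume $X$ is $\mathcal B$-pseudo-unital, which lets one factor the action through $\tilde\omega_{\mathcal B}$ and define, for each $i$, a functional $\xi_i \in X^*$ built by pairing $D$ (lifted to $\mathcal B \widehat\otimes_{\mathfrak A^\#} \mathcal B$ via $a \otimes b \mapsto a \cdot D(b)$) against the diagonal $m_i$. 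The approximate-diagonal conditions $b \cdot m_i - m_i \cdot b \to 0$ and $\tilde\omega_{\mathcal B}^{**}(m_i) \to 1$ then translate, after a weak$^*$ passage to the bidual, into $b \cdot \xi_i - \xi_i \cdot b \to D(b)$ for every $b \in \mathcal B$.

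I expect the main obstacle to be the bookkeeping in (iii) $\Rightarrow$ (i): carefully constructing the pairing that turns the approximate diagonal into an approximating net for $\xi_i$, and verifying that both defects ($b \cdot m_i - m_i \cdot b \to 0$ and $\tilde\omega_{\mathcal B}^{**}(m_i) \to 1$) feed correctly into the identity $b \cdot \xi_i - \xi_i \cdot b \to D(b)$. The subtlety is that $\tilde\omega_{\mathcal B}^{**}(m_i)$ only converges to $1$ in $(\mathcal B/J_{\mathcal B})^{**}$ rather than equaling it, so one must handle the $J_{\mathcal B}$-quotient and the pseudo-unitality reduction simultaneously; this is precisely where the commutativity of $\mathcal B \widehat\otimes_{\mathfrak A^\#} \mathcal B$ as an $\mathfrak A^\#$-module is needed to make the module-derivation identities on the tensor product hold and to ensure the relevant actions descend to the module tensor product. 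The $w^*$-approximate case is handled in parallel by replacing norm convergence with weak$^*$ convergence throughout.
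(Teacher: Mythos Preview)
Your proposal is correct and follows essentially the same route as the paper: in (i)$\Rightarrow$(ii) the paper takes the concrete choice $m_0=1\otimes_{\mathfrak A^\#}1$ and uses the identification $\ker\tilde\omega_{\mathcal B}^{**}=(\ker\tilde\omega_{\mathcal B})^{**}$ to view the target as a dual module, while in (iii)$\Rightarrow$(i) it reduces to pseudo-unital $X$ via Lemma~\ref{l1}, notes that $J_{\mathcal B}$ acts trivially, and defines $f_i(x)=m_i(F_x)$ with $F_x(a\otimes_{\mathfrak A^\#}b)=\langle x,\,a\cdot D(b)\rangle$ --- exactly the pairing you describe. The only point worth tightening is to state explicitly that $N=\ker\tilde\omega_{\mathcal B}^{**}$ is a \emph{dual} module via $N\cong(\ker\tilde\omega_{\mathcal B})^{**}$, since approximate amenability is formulated for derivations into duals.
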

\begin{proof}
(i)$\Longrightarrow$(ii) Consider the $\mathfrak A^\#$-module derivation $D_{1\otimes_{\mathfrak A^{\#}} 1}: \mathcal B\to (\mathcal B\widehat{\otimes}_{\mathfrak A^\#}\mathcal B)^{**}$. Clearly, $D_{1\otimes_{\mathfrak A^{\#}} 1}(\mathcal B)\subseteq \ker \tilde{\omega}_{\mathcal B}^{**}=(\ker\tilde{\omega}_{\mathcal B})^{**}$ and so there is a net $(n_i)\in \ker \tilde{\omega}_{\mathcal B}^{**}$ such that $D_{1\otimes_{\mathfrak A^{\#}} 1}(b)=\lim_i D_{n_i}(b)$ for all $b\in \mathcal B$. Set $m_i=1\otimes_{\mathfrak A^{\#}} 1- n_i$. Then ${\tilde{\omega}}_{\mathcal B}^{**}(m_i)=1$ and for $b\in \mathcal B$, 
$$b\cdot m_i-m_i\cdot b=D_{1\otimes_{\mathfrak A^{\#}} 1}(b)-(b\cdot n_i-n_i\cdot b) \to 0.$$

(ii)$\Longrightarrow$(iii) is trivial.

(iii)$\Longrightarrow$(i) Let $X$ be a commutative Banach $\mathcal B$-$\mathfrak A^{\#}$-module and $D:\mathcal B \to X^*$ be an $\mathfrak A^{\#}$-module derivation. We follow the standard argument. Since $\mathcal B$ is unital, by Lemma \ref{l1}, we may assume that $X$ is pseudo-unital. Note also that $J_{\mathcal B}$ has zero action on $X$ and thus we can assume that $X$ is a commutative Banach $\mathcal B/J_{\mathcal B}$-$\mathfrak A^{\#}$-module.  Now for each $i$ set $f_i(x)=m_i(F_x)$, where for each $x\in X$, the linear functional $F_x: \mathcal B\to (\mathcal B\widehat{\otimes}_{\mathfrak A^\#}\mathcal B)^*$ is defined by $$F_x(a\otimes_{\mathfrak A^{\#}}b)=\langle x, \, a\cdot D(b)\rangle \qquad (a,b \in \mathcal B).$$ 

It can be easily checked that $F_x$ is well defined. Let $(m_{i,j})\subset \mathcal B\widehat{\otimes}_{\mathfrak A^\#}\mathcal B$ be a net converging weak* to $m_i$. For each $a\in \mathcal B$, $x\in X$, we get $$F_{x\cdot a-a\cdot x}(m_{i,j})=(F_x\cdot a-a\cdot F_x)(m_{i,j})+ \langle x, \, \tilde{\omega}_{\mathcal B}(m_{i,j})\cdot D(a)\rangle.$$ 

Thus, for each $a\in \mathcal B$, $x\in X$, we have
$$\begin{array}{ll} \langle x, D_{f_i}(a)\rangle \!\!\! \vspace{0.1cm}&
= f_i(x\cdot a-a\cdot x) \\ \vspace{0.1cm} &
= m_i(F_{x\cdot a - a\cdot x}) \\ \vspace{0.1cm}&
= \lim_j F_{x\cdot a - a\cdot x}(m_{i,j}) \\ \vspace{0.1cm}&
=m_i(F_x\cdot a-a\cdot F_x) + \lim_j \langle x, \, \tilde{\omega}_{\mathcal B}(m_{i,j}) \cdot D(a)\rangle \\ \vspace{0.1cm}&
=\langle F_x, \, a\cdot m_i-m_i\cdot a\rangle + \langle x, \, \tilde{\omega}_{\mathcal B}^{**}(m_{i}) \cdot D(a)\rangle.
\end{array}$$

Therefore,
$$| \langle x, D_{f_i}(a) - D(a)\rangle | \leq \|a\cdot m_i-m_i\cdot a\| \|D\| \|x\|+\| \tilde{\omega}_{\mathcal B}^{**}(m_{i})-1\| \|D(a)\| \|x\|,$$
and thus $D(a)=\lim_i D_{f_i}(a)$. The proof in the case of
$w^*$-approximate amenability is similar.
\end{proof}

The following parallel result for the approximate contractibility can be proved in a similar way, and so we omit its proof.

\begin{thm} \label{t3}
Let $\mathcal A$ be a Banach algebra and a Banach $\mathfrak A$-bimodule with compatible actions. Then the following are equivalent:
\begin{enumerate}
\item[(i)] {$\mathcal A$ is $\mathfrak A^\#$-module approximately contractible;}
\item[(ii)] {$\mathcal B$ is $\mathfrak A^\#$-module approximately contractible;}
\item[(iii)] {There exists a net $(m_i)\subset \mathcal B\widehat{\otimes}_{\mathfrak A^\#}\mathcal B$ such that for all $b\in \mathcal B$, $b\cdot m_i-m_i\cdot b \to 0$ and $\tilde\omega_{\mathcal B}(m_i)\to 1$;}
\item[(iv)] {There exists a net $(m_i)\subset \mathcal B\widehat{\otimes}_{\mathfrak A^\#}\mathcal B$ such that for all $b\in \mathcal B$, $b\cdot m_i-m_i\cdot b \to 0$ and $\tilde\omega_{\mathcal B}(m_i)= 1$.}
\end{enumerate}
\end{thm}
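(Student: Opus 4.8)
My plan is to run the proof of Theorem \ref{t2} with the biduals removed: since module approximate contractibility concerns module derivations into $X$ rather than into $X^*$, every approximating object now lives in $\mathcal B\widehat{\otimes}_{\mathfrak A^\#}\mathcal B$ itself, not in its second dual. Concretely I would first establish (i)$\Leftrightarrow$(ii) exactly as in Theorem \ref{t1}, and then close the cycle (ii)$\Rightarrow$(iv)$\Rightarrow$(iii)$\Rightarrow$(ii). As in Theorem \ref{t2} I take $\mathcal B\widehat{\otimes}_{\mathfrak A^\#}\mathcal B$ to be commutative as an $\mathfrak A^\#$-module, which is precisely what makes the inner module derivation $D_{1\otimes_{\mathfrak A^\#}1}$ available below.

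For (i)$\Leftrightarrow$(ii) I would repeat the argument of Theorem \ref{t1} with $X$ in place of $X^*$. Given a commutative Banach $\mathcal B$-$\mathfrak A^\#$-module $X$ and a module derivation $D:\mathcal B\to X$, the relation $(0,u)=u\cdot 1_{\mathcal B}$ together with $D(1_{\mathcal B})=0$ gives $D((0,u))=u\cdot D(1_{\mathcal B})=0$ for $u\in\mathfrak A^\#$, so $D$ restricts to a module derivation on $\mathcal A$; conversely any module derivation $D:\mathcal A\to X$ extends by $\tilde D(a,u)=D(a)$. Approximate innerness is preserved under both operations, and no duality intervenes, so (i) and (ii) are equivalent.

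For (ii)$\Rightarrow$(iv) I would look at the inner module derivation $D_{1\otimes_{\mathfrak A^\#}1}:\mathcal B\to\mathcal B\widehat{\otimes}_{\mathfrak A^\#}\mathcal B$. Since $\tilde\omega_{\mathcal B}$ is a module map carrying $1\otimes_{\mathfrak A^\#}1$ to the identity of $\mathcal B/J_{\mathcal B}$, its range lands in the closed commutative submodule $\ker\tilde\omega_{\mathcal B}$. Applying (ii) to this derivation yields a net $(n_i)\subset\ker\tilde\omega_{\mathcal B}$ with $D_{1\otimes_{\mathfrak A^\#}1}(b)=\lim_i(b\cdot n_i-n_i\cdot b)$; putting $m_i=1\otimes_{\mathfrak A^\#}1-n_i$ gives $\tilde\omega_{\mathcal B}(m_i)=1$ and $b\cdot m_i-m_i\cdot b=D_{1\otimes_{\mathfrak A^\#}1}(b)-(b\cdot n_i-n_i\cdot b)\to0$, which is (iv). The step (iv)$\Rightarrow$(iii) is immediate.

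The heart of the matter is (iii)$\Rightarrow$(ii), where the weak$^*$ averaging of Theorem \ref{t2} is replaced by a direct map. Let $X$ be a commutative Banach $\mathcal B$-$\mathfrak A^\#$-module and $D:\mathcal B\to X$ a module derivation; since $J_{\mathcal B}$ acts trivially on $X$ (as noted in the proof of Theorem \ref{t2}), I may regard $X$ as a $\mathcal B/J_{\mathcal B}$-module. I would define $\Psi:\mathcal B\widehat{\otimes}_{\mathfrak A^\#}\mathcal B\to X$ by $\Psi(a\otimes_{\mathfrak A^\#}b)=a\cdot D(b)$; this is well defined on the module tensor product because $D(\alpha\cdot b)=\alpha\cdot D(b)$ together with the compatibility $a\cdot(\alpha\cdot y)=(a\cdot\alpha)\cdot y$ makes it annihilate $I_{\mathcal B}$, and it is bounded. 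A direct computation gives $\Psi(a\cdot m)=a\cdot\Psi(m)$ and $\Psi(m\cdot a)=\Psi(m)\cdot a+\tilde\omega_{\mathcal B}(m)\cdot D(a)$, so with $\xi_i=\Psi(m_i)$,
\begin{equation*}
a\cdot\xi_i-\xi_i\cdot a=\Psi(a\cdot m_i-m_i\cdot a)+\tilde\omega_{\mathcal B}(m_i)\cdot D(a).
\end{equation*}
The first term tends to $0$ by continuity of $\Psi$ and the hypothesis $a\cdot m_i-m_i\cdot a\to0$, while the second tends to $1\cdot D(a)$; and since $\mathcal B$ is unital with $D(1_{\mathcal B})=0$ one has $D(a)=D(1_{\mathcal B}\cdot a)=1_{\mathcal B}\cdot D(a)$, so the second term tends to $D(a)$. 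Hence $D(a)=\lim_i D_{\xi_i}(a)$, so $D$ is approximately inner and $\mathcal B$ is module approximately contractible. The only genuine work is checking that $\Psi$ is well defined on $\mathcal B\widehat{\otimes}_{\mathfrak A^\#}\mathcal B$ and verifying the two module identities; the point to keep in mind---and the reason (iii) is stated inside $\mathcal B\widehat{\otimes}_{\mathfrak A^\#}\mathcal B$ rather than its bidual---is that no weak$^*$ limit is taken, so $\xi_i=\Psi(m_i)$ really does lie in $X$.
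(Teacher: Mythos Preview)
Your proposal is correct and is exactly the adaptation the paper intends: the paper omits the proof, saying only that it parallels Theorem \ref{t2}, and your argument---removing the biduals, defining $\Psi:\mathcal B\widehat{\otimes}_{\mathfrak A^\#}\mathcal B\to X$ directly by $\Psi(a\otimes_{\mathfrak A^\#}b)=a\cdot D(b)$, and taking $m_i$ in $\mathcal B\widehat{\otimes}_{\mathfrak A^\#}\mathcal B$ rather than its second dual---is precisely that parallel. The one place to be slightly more careful is the assertion $D(1_{\mathcal B})=0$ and $1_{\mathcal B}\cdot D(a)=D(a)$ in (iii)$\Rightarrow$(ii): this requires first reducing to modules on which $1_{\mathcal B}$ acts as the identity (the analogue of the paper's invocation of Lemma \ref{l1} in the corresponding step of Theorem \ref{t2}), but that reduction is routine.
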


We now proceed to show that module approximate amenability and module approximate contractibility are the same properties.

Recall that the {\it convex hull} of a subset $A$ of a normed space $X$, denoted by ${\rm{co}}(A)$, is the intersection of all convex sets in $X$ that contain $A$.

\begin{thm} \label{t4}
Let $\mathcal A$ be a Banach algebra and a Banach $\mathfrak A$-bimodule with compatible actions. Then the following statements are equivalent:
\begin{enumerate}
\item[(i)] {$\mathcal B$ is $\mathfrak A^\#$-module approximately contractible;}
\item[(ii)] {$\mathcal B$ is $\mathfrak A^\#$-module approximately amenable;}
\item[(iii)] {$\mathcal B$ is $\mathfrak A^\#$-module $w^*$-approximately amenable.}
\end{enumerate}
\end{thm}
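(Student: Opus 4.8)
The plan is to prove the cyclic chain of implications $(i)\Rightarrow(ii)\Rightarrow(iii)\Rightarrow(i)$, closing the loop so that all three module approximate properties for $\mathcal B$ coincide. The implication $(ii)\Rightarrow(iii)$ is immediate from the definitions, since norm convergence of the net $(a\cdot x_i-x_i\cdot a)$ to $D(a)$ forces $w^*$-convergence; no work is required there. The implication $(i)\Rightarrow(ii)$ should also be essentially formal: an $\mathfrak A^\#$-module derivation $D:\mathcal B\to X^*$ into a dual module is in particular a module derivation into a Banach $\mathcal B$-$\mathfrak A^\#$-module, so approximate contractibility of $\mathcal B$ applied with the module $X^*$ yields that $D$ is approximately inner. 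Thus the entire content of the theorem is concentrated in the reverse implication $(iii)\Rightarrow(i)$, which is where I expect the main obstacle to lie.

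For $(iii)\Rightarrow(i)$ I would exploit the structural characterizations already established. Since $\mathcal B$ is unital, I may invoke the criteria of Theorem \ref{t2} and Theorem \ref{t3}, which translate the three amenability-type notions into the existence of suitable ``approximate diagonals'' in $\mathcal B\widehat\otimes_{\mathfrak A^\#}\mathcal B$ (respectively its bidual) with $b\cdot m_i-m_i\cdot b\to 0$ and $\tilde\omega_{\mathcal B}(m_i)\to 1$. Concretely, $w^*$-approximate amenability gives, by Theorem \ref{t2}(iii), a net $(m_i)\subset(\mathcal B\widehat\otimes_{\mathfrak A^\#}\mathcal B)^{**}$ with $b\cdot m_i-m_i\cdot b\to 0$ weak$^*$ and $\tilde\omega_{\mathcal B}^{**}(m_i)\to 1$ weak$^*$; the target is to manufacture from this a net in the predual $\mathcal B\widehat\otimes_{\mathfrak A^\#}\mathcal B$ satisfying the stronger norm condition of Theorem \ref{t3}(iii), which characterizes approximate contractibility. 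This is precisely the module analogue of the Banach-algebraic fact \cite[Theorem 2.1]{GhLZ} that $w^*$-approximate amenability, approximate amenability, and approximate contractibility all coincide.

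The key step, and the expected difficulty, is the passage from the weak$^*$-convergent net in the bidual to a norm-convergent net in the algebra itself. The standard device here is a Mazur-type convexity argument: the $w^*$-limit points in question lie in the weak$^*$-closure of a convex set of inner values, and by Goldstine's theorem together with the fact that the weak$^*$-closure and the norm-closure of a \emph{convex} set coincide, one replaces weak$^*$-approximate elements by genuine convex combinations that converge in norm. This is exactly why the statement recalls the convex hull $\mathrm{co}(A)$ immediately before the theorem. I would apply this to the net $(b\cdot m_i-m_i\cdot b)$ viewed inside the direct sum (or iterated copies) of $\mathcal B\widehat\otimes_{\mathfrak A^\#}\mathcal B$ ranging over finitely many elements $b$, using the product weak$^*$-topology, so that a single convexification simultaneously controls finitely many commutators and the value of $\tilde\omega_{\mathcal B}$. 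The care needed is to ensure that the convex combinations preserve the $\mathfrak A^\#$-module structure and that $\tilde\omega_{\mathcal B}$ of the new net still tends to $1$, which follows because $\tilde\omega_{\mathcal B}$ is linear and weak$^*$-weak$^*$ continuous on bounded sets; once a norm-approximate diagonal is in hand, Theorem \ref{t3} delivers approximate contractibility and hence (i).
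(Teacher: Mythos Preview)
Your proposal is correct and follows essentially the same route as the paper: the implications $(i)\Rightarrow(ii)\Rightarrow(iii)$ are declared trivial, and for $(iii)\Rightarrow(i)$ the paper invokes Theorem~\ref{t2} to obtain a weak$^*$-approximate diagonal in $(\mathcal B\widehat\otimes_{\mathfrak A^\#}\mathcal B)^{**}$, drops it into $\mathcal B\widehat\otimes_{\mathfrak A^\#}\mathcal B$ via Goldstine, upgrades weak convergence to norm convergence by Mazur's theorem applied in the finite direct sum $(\mathcal B\widehat\otimes_{\mathfrak A^\#}\mathcal B)^k\oplus(\mathcal B/J_{\mathcal B})$, and concludes with Theorem~\ref{t3}. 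One small clarification: it is Mazur's theorem (weak closure equals norm closure for convex sets), not a weak$^*$ statement, that does the convexity step---Goldstine is used separately, beforehand, to pass from the bidual to the predual.
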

\begin{proof}
The implications (i) $\Longrightarrow$ (ii) and (ii) $\Longrightarrow$ (iii) are trivial.

(iii) $\Longrightarrow$ (i) Assume that (iii) holds. By Theorem \ref{t2}, there is a net $(m_i)\subset (\mathcal B\widehat{\otimes}_{\mathfrak A^\#}\mathcal B)^{**}$ such that for all $b\in \mathcal B$, $b\cdot m_i-m_i\cdot b \to 0$ and $\tilde\omega_{\mathcal B}^{**}(m_i)\to 1$ in the $w^*$-topology of $(\mathcal B\widehat{\otimes}_{\mathfrak A^\#}\mathcal B)^{**}$ and $(\mathcal B/J_{\mathcal B})^{**}$, respectively. Take $\epsilon>0$ and finite sets $F\subset \mathcal B$, $G\subset (\mathcal B/J_{\mathcal B})^*$ and $H\subset (\mathcal B\widehat{\otimes}_{\mathfrak A^\#}\mathcal B)^*$. Then there is some $i$ such that
$$|\langle f\cdot b-b\cdot f, \, m_i\rangle|=|\langle f, \, b\cdot m_i -m_i \cdot b\rangle|<\epsilon \qquad (f\in H, \, b\in \mathcal B),$$
and
$$|\langle g, \, \tilde\omega_{\mathcal B}^{**}(m_i) -1\rangle| <\epsilon  \qquad (g\in G).$$

By Goldstien's theorem and the $w^*$-continuity of
$\tilde\omega_{\mathcal B}^{**}$, there is $n\in \mathcal B\widehat{\otimes}_{\mathfrak A^\#}\mathcal B$
such that
$$|\langle f, \, b\cdot n -n \cdot b\rangle| = |\langle f\cdot b-b\cdot f, \, n\rangle|<\epsilon \qquad (f\in H, \, b\in {\mathcal B}),$$
and
$$|\langle \tilde\omega_{\mathcal B}(n) -1, \, g\rangle| <\epsilon  \qquad (g\in G).$$

So there is a net $(n_j)_{j\in \Gamma}\subset \mathcal B\widehat{\otimes}_{\mathfrak A^\#}\mathcal B$ such that
$b\cdot n_j -n_j \cdot b\to 0$ and $\tilde\omega_{\mathcal B}(n_j) \to 1$, weakly in $\mathcal B\widehat{\otimes}_{\mathfrak A^\#}\mathcal B$ and $\mathcal B/J_{\mathcal B}$, respectively.
Now for each finite set $F\subset \mathcal B$, say $F=\{b_1, \ldots, b_k\}$,
$$(b_1\cdot n_j -n_j\cdot b_1, \ldots, b_k\cdot n_j -n_j \cdot b_k, \, \tilde\omega_{\mathcal B}(n_j)-1) \to (0, \ldots, 0),$$
weakly in $(\mathcal B\widehat{\otimes}_{\mathfrak A^\#}\mathcal B)^k \oplus (\mathcal B/J_{\mathcal B})$. Thus
$(0, \ldots, 0)$ belongs to the weak closure of $$V={\rm{co}}\{(b_1\cdot n_j -n_j\cdot b_1, \ldots, b_k\cdot n_j -n_j \cdot b_k, \, \tilde\omega_{\mathcal B}(n_j)-1) : j\in \Gamma\}.$$

 By Mazur's theorem $(0, \ldots, 0)$ belongs to the norm closure of $V$. Hence, for each $\epsilon >0$, there exists $u_{F, \epsilon}\in {\rm{co}}\{n_j\}$ such that
$$\|b\cdot u_{F, \epsilon}-u_{F, \epsilon}\cdot b\|<\epsilon, \quad \|\tilde\omega_{\mathcal B}(u_{F, \epsilon})-1\|<\epsilon \qquad (b\in F).$$ 

Therefore, by Theorem \ref{t3}, $\mathcal B$ is $\mathfrak A^\#$-module approximately contractible.
\end{proof}

Using Theorems \ref{t1}, \ref{t3} and \ref{t4}, similar to \cite[Theorem 2.1]{GhLZ}, we have the following result. 

\begin{cor} \label{c1}
Let $\mathcal A$ be a Banach algebra and a Banach $\mathfrak A$-bimodule with compatible actions. Then the following are equivalent:
\begin{enumerate}
\item[(i)] {$\mathcal A$ is $\mathfrak A^\#$-module approximately contractible;}
\item[(ii)] {$\mathcal A$ is $\mathfrak A^\#$-module approximately amenable;}
\item[(iii)] {$\mathcal A$ is $\mathfrak A^\#$-module $w^*$-approximately amenable.}
\end{enumerate}
If, in addition, $\mathcal A$ is left or right essential as an $\mathfrak A$-module, then the above statements are equivalent to the following three statements:
\begin{enumerate}
\item[(iv)] {$\mathcal A$ is $\mathfrak A$-module approximately contractible;}
\item[(v)] {$\mathcal A$ is $\mathfrak A$-module approximately amenable;}
\item[(vi)] {$\mathcal A$ is $\mathfrak A$-module $w^*$-approximately amenable.}
\end{enumerate}
\end{cor}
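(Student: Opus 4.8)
The plan is to transport every statement about $\mathcal A$ to the corresponding statement about the unital algebra $\mathcal B$, where the three notions have already been reconciled in Theorem \ref{t4}, and then to transport back. The three theorems already proved dovetail as follows: Theorem \ref{t3} links $\mathfrak A^\#$-module approximate contractibility of $\mathcal A$ with that of $\mathcal B$; Theorem \ref{t1} links $\mathfrak A^\#$-module approximate amenability (and $w^*$-approximate amenability) of $\mathcal A$ with that of $\mathcal B$; and Theorem \ref{t4} collapses approximate contractibility, approximate amenability and $w^*$-approximate amenability into a single property for $\mathcal B$. Chaining these gives the first block (i)$\Leftrightarrow$(ii)$\Leftrightarrow$(iii) with no essentiality hypothesis.

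Concretely, I would argue that $\mathcal A$ is $\mathfrak A^\#$-module approximately contractible iff $\mathcal B$ is (Theorem \ref{t3}), iff $\mathcal B$ is $\mathfrak A^\#$-module approximately amenable (Theorem \ref{t4}), iff $\mathcal A$ is $\mathfrak A^\#$-module approximately amenable (Theorem \ref{t1}); this proves (i)$\Leftrightarrow$(ii). In the same way, $\mathcal B$ being $\mathfrak A^\#$-module approximately amenable is equivalent, through Theorem \ref{t4}, to $\mathcal B$ being $\mathfrak A^\#$-module $w^*$-approximately amenable, and then through Theorem \ref{t1} to $\mathcal A$ being $\mathfrak A^\#$-module $w^*$-approximately amenable, giving (ii)$\Leftrightarrow$(iii).

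For the essential case I would invoke the ``if, in addition'' clause of Theorem \ref{t1}, which identifies the $\mathfrak A$-module and $\mathfrak A^\#$-module versions of the (approximate and $w^*$-approximate) amenability notions once $\mathcal A$ is one-sided essential. This immediately yields (ii)$\Leftrightarrow$(v) and (iii)$\Leftrightarrow$(vi). The remaining point is (iv), for which Theorem \ref{t1} as stated gives no information, since it is phrased only for amenability, i.e.\ for codomains that are dual modules. I would close this gap by noting that the essentiality argument used in Theorem \ref{t1} to pass between $\mathfrak A$- and $\mathfrak A^\#$-module derivations never uses that the codomain is a dual: if $\mathcal A$ is one-sided essential, every $\mathfrak A$-module derivation $D:\mathcal A\to X$ into a commutative $\mathcal A$-$\mathfrak A$-module is automatically linear (by the computation preceding Proposition \ref{pro1}) and hence, viewing $X$ as a commutative $\mathcal A$-$\mathfrak A^\#$-module in the usual way, is an $\mathfrak A^\#$-module derivation; conversely every $\mathfrak A^\#$-module is an $\mathfrak A$-module. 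Thus (iv)$\Leftrightarrow$(i) follows by exactly the reasoning of Theorem \ref{t1}, now carried out with $X$ in place of $X^*$.

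I expect this last step---promoting $\mathfrak A$-module approximate contractibility to its $\mathfrak A^\#$-counterpart---to be the only genuine obstacle, and it is mild: it amounts to checking that the linearization of module derivations under essentiality and the extension of the module actions to $\mathfrak A^\#$ are both indifferent to whether one maps into $X$ or into $X^*$. Everything else is a formal chase through Theorems \ref{t1}, \ref{t3} and \ref{t4}, exactly as the phrase ``similar to \cite{GhLZ}'' suggests.
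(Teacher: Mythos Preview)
Your proposal is correct and follows exactly the route the paper intends: chain Theorems \ref{t1}, \ref{t3} and \ref{t4} through $\mathcal B$ to get (i)$\Leftrightarrow$(ii)$\Leftrightarrow$(iii), then invoke the essentiality clause of Theorem \ref{t1} for (v) and (vi). Your observation that the (i)$\Leftrightarrow$(iv) step is not literally contained in Theorem \ref{t1} but follows by repeating its proof with $X$ in place of $X^*$ is a genuine (and easy) detail the paper leaves implicit, and your handling of it is correct.
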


Let $X, Y$ be Banach $\mathcal A$-$\mathfrak A$-modules. We say that the map $\phi:
X\to Y$ is a left $\mathcal A$-$\mathfrak A$-module homomorphism if it is an
$\mathfrak A$-bimodule homomorphism and a left $\mathcal A$-module homomorphism,
that is
$$\phi(\alpha\cdot x)=\alpha\cdot\phi(x),\quad
\phi(x\cdot\alpha)=\phi(x)\cdot\alpha,\quad \phi(a\cdot x)=a\cdot
\phi(x),$$
for all $\alpha\in \mathfrak A, a\in \mathcal A$ and  $x\in X$. Right $\mathcal A$-$\mathfrak A$-module homomorphisms and (two-sided) $\mathcal A$-$\mathfrak A$-module homomorphisms are defined in a similar fashion.

In the next proposition we deal with the module approximate amenability of $\mathcal A^{**}$ and $\mathcal A$. Consider the module projective tensor product\\ $\mathcal B^{**}\widehat{\otimes} _{\mathfrak A^\#} \mathcal B^{**}$, that is $(\mathcal B^{**} \widehat{\otimes} \mathcal B^{**})/M$, where $M$ is the closed linear span of $\{ F\cdot u \otimes G-F \otimes u\cdot G : u\in {\mathfrak A}^\#,F,G\in\mathcal B^{**}\}$.
 Also consider $N$ to be
the closed ideal of $\mathcal B^{**}$ generated by elements of the form $(F\cdot u )\square G-F
\square (u\cdot G)$, for all $u\in {\mathfrak A}^\#,F,G\in\mathcal B^{**}$.

\begin{prop}\label{p2}
Let $\mathcal A$ be a Banach $\mathcal A$-$\mathfrak A$-module such that
$\mathcal A\widehat{\otimes}_{\mathfrak A}\mathcal A$ is a commutative Banach
$\mathcal A$-$\mathfrak A$-module. If $\mathcal A^{**}$ is module approximately
amenable, then so is $\mathcal A$.
\end{prop}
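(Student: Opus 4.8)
The plan is to mimic the classical bidual argument of Ghahramani and Loy, replacing linear derivations by $\mathfrak A^\#$-module derivations and reading off approximate innerness on $\mathcal A$ from that on $\mathcal A^{**}$ by restriction to the canonical image $\hat{\mathcal A}$. I read ``module approximately amenable'' as $\mathfrak A^\#$-module approximately amenable (the form in which Theorems \ref{t1}--\ref{t4} and Corollary \ref{c1} are stated), and I let $\mathfrak A^\#$ act on $\mathcal A^{**}$ and on all the dual modules below by the canonical extension of its action on $\mathcal A$, with $\mathcal A^{**}$ carrying the first Arens product $\square$. Fix a commutative Banach $\mathcal A$-$\mathfrak A^\#$-module $X$ and an $\mathfrak A^\#$-module derivation $D:\mathcal A\to X^*$; the goal is to produce a net $(\phi_i)\subset X^*$ with $D(a)=\lim_i(a\cdot\phi_i-\phi_i\cdot a)$ for every $a\in\mathcal A$.

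First I would assemble the second-dual module data. Here $X^{**}$ is simultaneously a Banach $\mathcal A^{**}$-bimodule and a Banach $\mathfrak A^\#$-bimodule, the actions being the canonical extensions of those on $X$; since $X$ is commutative as an $\mathfrak A^\#$-module, a direct check on functionals shows that $X^{**}$ is again commutative, so $X^{**}$ is a commutative Banach $\mathcal A^{**}$-$\mathfrak A^\#$-module and hence $X^{***}=(X^{**})^*$ is a legitimate coefficient module for testing module approximate amenability of $\mathcal A^{**}$. The commutativity hypothesis on $\mathcal A\widehat{\otimes}_{\mathfrak A}\mathcal A$ is what guarantees that this notion is available for $\mathcal A^{**}$ and that no well-definedness issue arises. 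Next, pass to the transpose $D^{**}:\mathcal A^{**}\to X^{***}$.

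The technical heart of the argument, and the step I expect to be the main obstacle, is to verify that $D^{**}$ is an $\mathfrak A^\#$-module derivation: the Leibniz identity $D^{**}(\Phi\square\Psi)=D^{**}(\Phi)\cdot\Psi+\Phi\cdot D^{**}(\Psi)$ for the first Arens product, together with $D^{**}(u\cdot\Phi)=u\cdot D^{**}(\Phi)$ and $D^{**}(\Phi\cdot u)=D^{**}(\Phi)\cdot u$ for $u\in\mathfrak A^\#$. This is the standard ``the second transpose of a derivation into a dual module is again a derivation'' computation: approximate $\Phi,\Psi\in\mathcal A^{**}$ by bounded nets in $\hat{\mathcal A}$, use that $D$ is already a derivation and an $\mathfrak A^\#$-module map on $\mathcal A$, and exploit the separate $w^*$-continuity built into the first Arens product and the dual actions. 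The delicate bookkeeping is tracking which slot of each product is $w^*$-continuous, so that the iterated limits may legitimately be interchanged.

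Finally I would invoke the hypothesis. Since $\mathcal A^{**}$ is module approximately amenable and $X^{***}$ is the dual of the commutative module $X^{**}$, the module derivation $D^{**}$ is approximately inner: there is a net $(\Psi_i)\subset X^{***}$ with $D^{**}(\Phi)=\lim_i(\Phi\cdot\Psi_i-\Psi_i\cdot\Phi)$ for all $\Phi\in\mathcal A^{**}$. Let $P=\iota_X^*:X^{***}\to X^*$ be the natural contractive projection dual to the inclusion $\iota_X:X\to X^{**}$; it is both an $\mathcal A^{**}$- and an $\mathfrak A^\#$-module homomorphism, and it satisfies $P\circ D^{**}(\hat a)=D(a)$ for $a\in\mathcal A$. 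Setting $\phi_i=P(\Psi_i)\in X^*$ and restricting the displayed limit to $\Phi=\hat a$, where $\hat a$ acts on $X^*$ exactly as $a$ does, gives $D(a)=\lim_i(a\cdot\phi_i-\phi_i\cdot a)$ in norm, so $D$ is approximately inner. As $X$ was an arbitrary commutative Banach $\mathcal A$-$\mathfrak A^\#$-module, $\mathcal A$ is module approximately amenable; should only $w^*$-convergence be available at some stage, the equivalence (i)$\Longleftrightarrow$(iii) of Corollary \ref{c1} upgrades it to genuine module approximate amenability.
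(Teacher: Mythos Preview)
Your argument is correct and follows the classical Ghahramani--Loy route faithfully adapted to the module setting: lift $D$ to $D^{**}:\mathcal A^{**}\to X^{***}$, check it is still an $\mathfrak A^\#$-module derivation into the dual of the commutative $\mathcal A^{**}$-$\mathfrak A^\#$-module $X^{**}$, invoke the hypothesis, and project back along $\iota_X^*$. The linearity worry for module derivations is harmless here because you work with $\mathfrak A^\#$-module derivations, and the unit of $\mathfrak A^\#$ forces homogeneity.

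The paper takes a different path. It passes to the unitization $\mathcal B=\mathcal A\oplus\mathfrak A^\#$, uses Theorem~\ref{t2} to recast module approximate amenability of $\mathcal B$ as the existence of a suitable net in $(\mathcal B\widehat{\otimes}_{\mathfrak A^\#}\mathcal B)^{**}$, and then transports an approximate diagonal for $\mathcal B^{**}$ down to one for $\mathcal B$ via the map $\Omega_{\mathfrak A}:\mathcal B^{**}\widehat{\otimes}_{\mathfrak A^\#}\mathcal B^{**}\to(\mathcal B\widehat{\otimes}_{\mathfrak A^\#}\mathcal B)^{**}$ borrowed from \cite{abe}, together with Goldstine approximations and the induced map $\lambda:\mathcal B^{**}/N\to\mathcal B^{**}/J_{\mathcal B}^{\perp\perp}$. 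This is where the hypothesis that $\mathcal A\widehat{\otimes}_{\mathfrak A}\mathcal A$ be a commutative $\mathfrak A$-module is genuinely consumed: Theorem~\ref{t2} requires $\mathcal B\widehat{\otimes}_{\mathfrak A^\#}\mathcal B$ commutative. By contrast, your derivation-lifting argument does not visibly use that hypothesis at all---your sentence invoking it is vague, and in fact the verification that $X^{**}$ is a commutative $\mathcal A^{**}$-$\mathfrak A^\#$-module and that $D^{**}$ is a module derivation needs only that $\mathcal A$ is an $\mathcal A$-$\mathfrak A$-module and $X$ is commutative. So your approach is more elementary (no tensor machinery, no external input from \cite{abe}) and apparently slightly more general; the paper's approach, on the other hand, builds the structural link to module approximate diagonals that is used elsewhere in the article.
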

\begin{proof} By Theorem \ref{t1}, it is enough to show that $\mathcal B$ is $\mathfrak A^\#$-module approximately amenable. Since $\mathcal A$ is a Banach ${\mathcal A}$-${\mathfrak
A}$-module, one can routinely check that $(\mathcal A^{**},\square)$ is a Banach
$\mathcal A^{**}$-$\mathfrak A^{**}$-module. Thus
$\mathcal B^{**}=\mathcal A^{**}\oplus (\mathfrak A^{**})^\#$ is $(\mathfrak A^{**})^\#$-module
amenable by Theorem \ref{t1}, and so is $\mathfrak A^\#$-module
amenable. The discussion before Definition 3.5 in \cite{abe} shows that
there is a continuous linear mapping
$$\Omega_{\mathfrak A}: (\mathcal B^{**}\widehat{\otimes}_{\mathfrak A^\#}\mathcal B^{**})\cong\mathcal B^{**} \widehat{\otimes} \mathcal B^{**}/M\longrightarrow (\mathcal B\widehat{\otimes}
 \mathcal B)^{**}/I_{\mathcal B}^{\perp \perp}\cong(\mathcal B\widehat{\otimes}_{\mathfrak A^\#}\mathcal B)^{**}$$
  such that for $a,b \in \mathcal B $ and $m \in \mathcal B^{**}\widehat{\otimes} \mathcal B^{**}$ the following
equalities hold:
\begin{enumerate}
\item[(i)] {$ \Omega_{\mathfrak A} (a \otimes b+M)= a \otimes b+I_{\mathcal B}^{\perp
\perp}$;}
\item[(ii)] {$ \Omega_{\mathfrak A}(m+M)\cdot b= \Omega_{\mathfrak
A}(m\cdot b+M)$;}
\item[(iii)] {$ b\cdot\Omega_{\mathfrak A}(m+M)= \Omega_{\mathfrak A}(b\cdot
m+M)$;}
\item[(iv)] {$ \tilde{\omega}_{\mathcal B}^{**}(\Omega_{\mathfrak
A}(m+M))=\lambda \circ \tilde{\omega}_{\mathcal B^{**}}(m+M)$,}
\end{enumerate}
where $\lambda:\mathcal B^{**}/N\longrightarrow
\mathcal B^{**}/J_{\mathcal B}^{\perp\perp}$ defined by
$\lambda(F+N)=F+J_{\mathcal B}^{\perp\perp}$ is a surjective continuous
$\mathcal B$-$\mathfrak A^\#$-module homomorphism. By assumption there is a net
$(\theta_j)\subset(\mathcal B^{**}\widehat{\otimes}_{\mathfrak A^\#}\mathcal B^{**})^{**}$
such that for all $F\in \mathcal B^{**}$, $F\cdot\theta_j-\theta_j\cdot F\rightarrow 0$
and
$F\cdot(\tilde{\omega}_{\mathcal B^{**}})^{**}(\theta_j)=F+N^{\perp\perp}$.
In particular, for each $b\in \mathcal B$ we have
$b\cdot\theta_j-\theta_j\cdot b\rightarrow 0$ and
$b\cdot(\tilde{\omega}_{\mathcal B^{**}})^{**}(\theta_j)=b+N\in
(\mathcal B^{**}/N)^{**}$. The canonical embedding $T:(\mathcal B\widehat{\otimes}
 \mathcal B/I_{\mathcal B})^*\longrightarrow (\mathcal B\widehat{\otimes}
 \mathcal B/I_{\mathcal B})^{***}$ and its adjoint are $\mathcal B$-$\mathfrak A^\#$-module
 homomorphisms, and thus for all $b\in \mathcal B$ we have
$$b\cdot T^*(\Omega^{**}_{\mathfrak A}(\theta_j))-T^*(\Omega^{**}_{\mathfrak A}(\theta_j))\cdot b\longrightarrow 0.$$

Consider an arbitrary and fixed $\theta_j$. By Goldstine's
Theorem there exists a bounded net $(\theta^i_j)$ in
$\mathcal B^{**}\widehat{\otimes}_{\mathfrak A^\#}\mathcal B^{**}$ such that $ \theta^i_j
\stackrel{w^{*}}{\longrightarrow}\theta_j$. Hence
\begin{align*}
b\cdot \tilde{\omega}_{\mathcal B}^{**}T^*(\Omega^{**}_{\mathfrak
A}(\theta_j)) &=b\cdot w^*-\lim_i
\tilde{\omega}_{\mathcal B}^{**}T^*(\Omega^{**}_{\mathfrak
A}(\widehat{\theta^i_j}))\\
&= b\cdot w^*-\lim_i
\tilde{\omega}_{\mathcal B}^{**}T^*(\widehat{\Omega_{\mathfrak
A}(\theta^i_j)}) \\
&=b\cdot w^*-\lim_i \tilde{\omega}_{\mathcal B}^{**}({\Omega_{\mathfrak
A}(\theta^i_j)})\\
&=b\cdot w^*-\lim_i(\lambda \circ
\tilde{\omega}_{\mathcal B^{**}}(\theta^i_j))\\
&=b\cdot w^*-\lim_i(\lambda^{**} \circ
\tilde{\omega}_{\mathcal B^{**}}^{**}(\widehat{\theta^i_j}))\\
&=\lambda^{**}(b\cdot w^*-\lim_i
\tilde{\omega}_{\mathcal B^{**}}^{**}(\widehat{\theta^i_j}))\\
&=\lambda^{**}(b\cdot \tilde{\omega}_{\mathcal B^{**}}^{**}({\theta_j})) \to \lambda^{**}(b+N)
=b+J_{\mathcal B}^{\perp\perp}.
\end{align*}

Therefore, the net $(T^*(\Omega^{**}_{\mathfrak A}(\theta_j)))$
satisfies in conditions of Theorem \ref{t2}.
\end{proof}

We say that the Banach algebra ${\mathfrak A}$ acts trivially on $\mathcal A$ from left (right) if for each $\alpha\in \mathfrak A$ and $a\in \mathcal A$, $\alpha\cdot a=\varphi(\alpha)a$ ($a\cdot\alpha=\varphi(\alpha)a$), where $\varphi$ is a character of ${\mathfrak A}$.

Note that Example 6.1 of \cite{GhL} shows that the two notions `approximate
amenability' and `amenability` do not coincide. This is the spacial
case of module approximate amenability and module amenability
with $\mathfrak A=\mathbb{C}$. So module approximate amenability and
module amenability are different notions. The next theorem
shows that the two notions `approximate module contractibility' and `module
contractibility' do not coincide; see \cite[Theorem 3.7]{HPA}. To achieve our aim, we need the following result which is similar to \cite[Proposition 3.2]{abe}.

\begin{prop}\label{pro3+}
Let $\mathcal A$ be an $\mathfrak A$-module approximately \emph{(}contractible\emph{)} amenable Banach algebra with trivial left action, and let $J_0$ be a closed ideal of $\mathcal A$ such that $J \subseteq J_0$. If $\mathcal A/J_0$ has an identity, then $\mathcal A/J_0$ is approximately \emph{(}contractible\emph{)} amenable.
\end{prop}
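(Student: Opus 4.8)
The plan is to reduce ordinary (approximate) amenability of the quotient $\mathcal A/J_0$ to the $\mathfrak A$-module (approximate) amenability of $\mathcal A$, by showing that every ordinary Banach bimodule over $\mathcal A/J_0$ carries a canonical commutative $\mathfrak A$-module structure through which ordinary derivations lift to module derivations. Write $q:\mathcal A\to\mathcal A/J_0$ for the quotient map and $\varphi$ for the character implementing the trivial left action.

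First I would establish the structural identity $q(a\cdot\alpha)=\varphi(\alpha)\,q(a)$ for all $a\in\mathcal A$, $\alpha\in\mathfrak A$. Since the left action is trivial, the generators of $J$ take the form $(a\cdot\alpha)b-a(\alpha\cdot b)=\bigl(a\cdot\alpha-\varphi(\alpha)a\bigr)b$, so $\bigl(a\cdot\alpha-\varphi(\alpha)a\bigr)b\in J\subseteq J_0$ for every $b\in\mathcal A$. Choosing $u\in\mathcal A$ with $q(u)$ equal to the identity of $\mathcal A/J_0$ and taking $b=u$, together with the fact that $wu-w\in J_0$ for every $w\in\mathcal A$ (a consequence of $q(u)$ being a right identity), I would conclude that $w:=a\cdot\alpha-\varphi(\alpha)a$ itself lies in $J_0$, which is exactly the asserted identity.

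Next, given any Banach $\mathcal A/J_0$-bimodule $X$ and a derivation $D:\mathcal A/J_0\to X^*$ (respectively $D:\mathcal A/J_0\to X$ in the contractible case), I would regard $X$ as a Banach $\mathcal A$-bimodule through $q$ and equip it with the $\mathfrak A$-actions $\alpha\cdot x=x\cdot\alpha=\varphi(\alpha)x$, making it a commutative Banach $\mathcal A$-$\mathfrak A$-module. The delicate compatibility $a\cdot(\alpha\cdot x)=(a\cdot\alpha)\cdot x$ is precisely where the identity $q(a\cdot\alpha)=\varphi(\alpha)q(a)$ is used, while the remaining compatibilities follow directly from triviality of the left action. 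The composite $\tilde D=D\circ q:\mathcal A\to X^*$ is then an $\mathfrak A$-module derivation: it is linear and satisfies the product rule because $D$ is a derivation and $q$ a homomorphism, and the two module-homomorphism conditions $\tilde D(\alpha\cdot a)=\alpha\cdot\tilde D(a)$ and $\tilde D(a\cdot\alpha)=\tilde D(a)\cdot\alpha$ both reduce to the structural identity of the first step.

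Finally, applying module approximate (contractible) amenability of $\mathcal A$ produces a net $(f_i)$ in $X^*$ (respectively $X$) with $\tilde D(a)=\lim_i\bigl(a\cdot f_i-f_i\cdot a\bigr)$ for all $a\in\mathcal A$. Since the $\mathcal A$-action factors through $q$ and $q$ is surjective, this reads $D(c)=\lim_i\bigl(c\cdot f_i-f_i\cdot c\bigr)$ for every $c\in\mathcal A/J_0$, so $D$ is approximately inner; as $X$ was arbitrary, $\mathcal A/J_0$ is approximately (contractible) amenable. The main obstacle, and the only place that genuinely uses both hypotheses $J\subseteq J_0$ and the existence of an identity in $\mathcal A/J_0$, is the structural identity $q(a\cdot\alpha)=\varphi(\alpha)q(a)$ of the first step; everything afterward is a routine transport of structure.
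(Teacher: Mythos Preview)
Your argument is correct and is precisely the standard transport-of-structure proof that the paper has in mind: the paper does not give its own proof but simply notes that the result is analogous to \cite[Proposition 3.2]{abe}, and your reduction---equipping an arbitrary $\mathcal A/J_0$-bimodule with the commutative $\mathfrak A$-action $\alpha\cdot x=x\cdot\alpha=\varphi(\alpha)x$, lifting the derivation through $q$, and reading the approximating net back down---is exactly that argument adapted to the approximate setting. Your isolation of the identity $q(a\cdot\alpha)=\varphi(\alpha)q(a)$ as the crucial step, and your derivation of it from $J\subseteq J_0$ together with the existence of an identity in $\mathcal A/J_0$, is the right emphasis.
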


Recall that an {\it inverse semigroup} is a semigroup $S$ such that for each $s\in S$ there is a unique element $s^*\in S$ with $ss^*s=s$ and $s^*ss^*=s^*$. Elements of the form $s^*s$ are called {\it idempotents} and the set of all idempotents is denoted by $E$.

Let $S$ be a (discrete) inverse semigroup
with the set of idempotents $E$. We consider the
natural order on $E$ as follows:
$$e\leq d \Longleftrightarrow ed=e \hspace{0.3cm}(e,d \in
E).$$ 

 The subsemigroup $E$ of $S$ is commutative and in fact is a semilattice
\cite[Theorem V.1.2]{how}, and so $ \ell ^{1}(E)$ could be
regarded as a commutative subalgebra of $ \ell ^{1}(S)$. Thus $ \ell ^{1}(S)$
is a Banach algebra and a Banach $ \ell ^{1}(E)$-module with
compatible actions \cite{am1}. We consider the following actions of $ \ell ^{1}(E)$ on $ \ell ^{1}(S)$:
$$\delta_e\cdot\delta_s = \delta_s, \ \delta_s\cdot\delta_e = \delta_{se} =
\delta_s * \delta_e \qquad (s \in S,  e \in E). \eqno{(3.1)}$$
Let $\phi$ be the augmentation character on $\ell ^{1}(E)$, that is, $\phi(\delta_{e})=1$ for each $e\in E$. Then for each
$f=\sum_{e\in E}f(e)\delta_{e}\in\ell ^{1}(E)$ and $g=\sum_{s\in
S}g(s)\delta_{s}\in\ell ^{1}(S)$, we have
\begin{align*}
f\cdot g&=(\sum_{e\in E}f(e)\delta_{e})\cdot(\sum_{s\in
S}g(s)\delta_{s})=\sum_{s\in S,e\in E}f(e)g(s)\delta_{e}\cdot\delta_{s} \\
&=\sum_{s\in S,e\in E}f(e)g(s)\delta_{s} =(\sum_{e\in
E}f(e))(\sum_{s\in S}g(s)\delta_{s})=\phi(f)g.
\end{align*}

Therefore, multiplication from left is trivial.
In this case, the
ideal $J_{\ell ^{1}(S)}$ (or $J$)  is the closed linear span of
$\{\delta_{set}-\delta_{st} : s,t \in S,  e \in E \}.$ We consider an equivalence relation on $S$ as follows:
$$s\approx t \Longleftrightarrow \delta_s-\delta_t \in J \qquad (s,t \in
S).$$

For an inverse semigroup $S$, the quotient semigroup
${S}/{\approx}$ is a discrete group (see \cite{abe, HPA}). Indeed,
${S}/{\approx}$ is homomorphic to the maximal group homomorphic
image $G_S$ of $S$ (see \cite{mn, pou2}). In particular, $S$ is amenable if and only if $S/\approx$
is amenable \cite[Theorem 2.9]{HPA}. As in \cite[Theorem
3.3]{ra1}, we may observe that $\ell ^{1}(S)/J\cong {\ell
^{1}}(S/\approx)$. Therefore, $ \ell ^{1}(E)$ induces actions on
$\ell ^{1}(S)/{J}\cong {\ell^{1}}(S/\approx)$ as usual:
$$\delta_e\cdot(\delta_s+J) = \delta_s+J, \,\,(\delta_s+J)\cdot\delta_e = \delta_{se}+J \qquad (s \in S,  e \in E).$$

With the above actions  ${\ell ^{1}}(S/\approx)$ becomes a
commutative Banach $\ell ^{1}(E)$-bimodule (see the proof of \cite[Theorem 2.9]{HPA}).

We remark that with the actions considered in $(3.1)$, $\mathcal A=\ell^1(S)$ is always a right essential $\ell^1(E)$-module. For if $f\in \ell^1(S)$, we have $$f=\sum_{s\in S} f(s)\delta_s = \sum_{s\in S} f(s) \delta_{s} * \delta_{s^*s}= \sum_{s\in S} f(s) \delta_{s} \cdot \delta_{s^*s},$$ that belongs to the closed linear span of $\ell^1(S)\cdot \ell^1(E)=\{\delta_s \cdot \delta_e : s\in S, e\in E \}$.

\begin{thm}\label{t6}
Let $S$ be an inverse semigroup with the set of idempotents $E$.
Then $\ell^1(S)$ is $\ell^1(E)$-module approximately
(contractible) amenable if and only if $S$ is amenable.
\end{thm}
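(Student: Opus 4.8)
The plan is to reduce this theorem to the established equivalences of the preceding results together with the known characterization of amenability for inverse semigroups. The central structural facts I would exploit are already in place: by Corollary \ref{c1}, the six notions of $\ell^1(E)$-module and $\ell^1(E)^\#$-module approximate amenability/contractibility all coincide for $\ell^1(S)$, since the excerpt establishes that $\ell^1(S)$ is right essential as an $\ell^1(E)$-module. Thus it suffices to prove the equivalence for a single one of these notions, say $\ell^1(E)$-module approximate contractibility. The other pillar is the identification $\ell^1(S)/J \cong \ell^1(S/\!\approx)$ as a commutative Banach $\ell^1(E)$-bimodule, where $S/\!\approx$ is a discrete group whose amenability is equivalent to that of $S$ by \cite[Theorem 2.9]{HPA}.

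First I would prove the forward implication. Assuming $\ell^1(S)$ is $\ell^1(E)$-module approximately (contractible) amenable, I would apply Proposition \ref{pro3+} with $J_0 = J$: since the left action of $\ell^1(E)$ on $\ell^1(S)$ is trivial (as computed just before the theorem, $f \cdot g = \phi(f)g$) and $\ell^1(S)/J \cong \ell^1(S/\!\approx)$ carries an identity (the group $S/\!\approx$ has a unit), the hypotheses of that proposition are met, so $\ell^1(S/\!\approx)$ is ordinary approximately (contractible) amenable. By the Ghahramani–Loy result \cite{GhL} that $\ell^1(G)$ of a discrete group $G$ is approximately amenable if and only if $G$ is amenable, it follows that $S/\!\approx$ is amenable, and hence $S$ is amenable by \cite[Theorem 2.9]{HPA}.

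For the converse, suppose $S$ is amenable, so that $S/\!\approx$ is an amenable discrete group and $\ell^1(S/\!\approx)$ is amenable (hence approximately amenable). I would transport this amenability back up to a module statement for $\ell^1(S)$. The natural route is via Theorem \ref{t2} (or its contractible analogue Theorem \ref{t3}): I would start from a net witnessing the module diagonal for $\ell^1(S/\!\approx) \cong \ell^1(S)/J$ and lift it through the quotient map to produce a net $(m_i)$ in $(\mathcal B \widehat{\otimes}_{\mathfrak A^\#} \mathcal B)^{**}$ satisfying the asymptotic centrality condition $b \cdot m_i - m_i \cdot b \to 0$ together with $\tilde\omega_{\mathcal B}^{**}(m_i) \to 1$. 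Because $J$ acts degenerately and $\tilde\omega$ is built to land in $\mathcal A/J$, the diagonal structure downstairs should pull back to the required approximate module diagonal, which by Theorem \ref{t2} yields $\mathfrak A^\#$-module approximate amenability of $\mathcal B$, and thence of $\ell^1(S)$ via Theorem \ref{t1}.

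The main obstacle I anticipate lies in this converse lifting step: the induced product map $\tilde\omega_{\mathcal B}$ takes values in the quotient $\mathcal B/J_{\mathcal B}$, so care is required to ensure that a diagonal for the group algebra $\ell^1(S/\!\approx)$ genuinely produces elements of $\mathcal B \widehat{\otimes}_{\mathfrak A^\#} \mathcal B$ whose images under $\tilde\omega_{\mathcal B}$ approximate the identity in the correct space, and that the $\ell^1(E)$-module (bimodule) actions are respected throughout the lift. Verifying that the unitization passage $\mathcal A \rightsquigarrow \mathcal B$ and the module projective tensor product interact correctly with the quotient by $J$—so that the centrality condition survives—is where the real bookkeeping sits; the commutativity hypothesis on the relevant module tensor product (needed to invoke Theorem \ref{t2}) should be checked or arranged, and the essentiality of $\ell^1(S)$ as a right $\ell^1(E)$-module, already noted in the excerpt, is what licenses the final descent through Corollary \ref{c1}.
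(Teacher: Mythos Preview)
Your forward direction is exactly the paper's argument: apply Proposition~\ref{pro3+} with $J_0=J$ (trivial left action, $\ell^1(S)/J\cong\ell^1(S/\!\approx)$ unital), deduce ordinary approximate amenability of $\ell^1(S/\!\approx)$, then invoke \cite[Theorem~3.2]{GhL} to get amenability of the discrete group $S/\!\approx$, hence of $S$.

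For the converse, however, you are working much harder than necessary. The paper simply observes that if $S$ is amenable then, by Amini's result \cite[Theorem~3.1]{am1} (equivalently \cite[Theorem~2.9]{HPA}), $\ell^1(S)$ is already $\ell^1(E)$-module \emph{amenable}, and module amenability trivially implies module approximate amenability and module approximate contractibility. Your proposed lifting of a diagonal from $\ell^1(S/\!\approx)$ up through $(\mathcal B\widehat{\otimes}_{\mathfrak A^\#}\mathcal B)^{**}$, with the attendant bookkeeping about $\tilde\omega_{\mathcal B}$, the commutativity hypothesis in Theorem~\ref{t2}, and the unitization passage, is entirely avoidable---and indeed the obstacles you yourself flag (verifying the module tensor product is a commutative $\mathfrak A^\#$-module, compatibility of the lift with the $\ell^1(E)$-actions) are genuine nuisances that the one-line route through module amenability sidesteps completely. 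There is nothing wrong in principle with your strategy, but it reproves a special case of Amini's theorem by hand.
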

\begin{proof}
Let $\ell^1(S)$ be module approximately (contractible) amenable.
Then by Proposition \ref{pro3+},
$\ell^1(S)/J\cong\ell^1(S/\approx)$ is  approximately
(contractible) amenable. Since $S/\approx$ is a discrete group,
by \cite[Theorem 3.2]{GhL}, it is amenable and so $S$ is amenable.

Conversely, if $S$ is amenable, then by \cite[Theorem 3.1]{am1} (or \cite[Theorem 2.9]{HPA}) we
conclude that $\ell^1(S)$ is module amenable and hence module
approximately (contractible) amenable.
\end{proof}

Module approximate amenability of the second dual $\ell^1(S)^{**}$ for
an inverse semigroup $S$ is characterized in the following theorem.

\begin{thm} \label{t5}{\it Let $S$ be an
inverse semigroup with the set of idempotents $E$. Then $\ell
^{1}(S)^{**}$ is $\ell^1(E)$-module approximately amenable if and only if
${S}/{\approx}$ is finite.}
\end{thm}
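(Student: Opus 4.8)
The plan is to reduce the module statement for $\ell^1(S)^{**}$ to the classical group result \cite[Theorem 3.3]{GhL}, which says that for a discrete group $G$ the bidual $\ell^1(G)^{**}$ is approximately amenable exactly when $G$ is finite. The connecting device is the quotient of $\ell^1(S)^{**}$ by $J^{\perp\perp}$, where $J=J_{\ell^1(S)}$, because
$$\ell^1(S)^{**}/J^{\perp\perp}\cong\big(\ell^1(S)/J\big)^{**}\cong\ell^1(S/\!\approx)^{**}$$
as Banach algebras for the first Arens product, and $S/\!\approx$ is a discrete group. I would treat the two implications separately.

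For the forward direction I would argue as follows. Since the left action of $\ell^1(E)$ on $\ell^1(S)$ is trivial, $\alpha\cdot g=\phi(\alpha)g$, passing to biduals gives $\alpha\cdot F=\phi(\alpha)F$ for $\alpha\in\ell^1(E)$ and $F\in\ell^1(S)^{**}$, so $\ell^1(S)^{**}$ again carries a trivial left action with character $\phi$. I would then check the inclusion $J_{\ell^1(S)^{**}}\subseteq J^{\perp\perp}$. Granting it, $J^{\perp\perp}$ is a closed ideal of $\ell^1(S)^{**}$ containing $J_{\ell^1(S)^{**}}$, and the quotient $\ell^1(S)^{**}/J^{\perp\perp}\cong\ell^1(S/\!\approx)^{**}$ is unital, because $S/\!\approx$ is a discrete group, so $\ell^1(S/\!\approx)$ is unital and the bidual of a unital algebra is unital. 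Proposition \ref{pro3+} then shows that $\ell^1(S/\!\approx)^{**}$ is approximately amenable, and \cite[Theorem 3.3]{GhL} forces $S/\!\approx$ to be finite.

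For the converse I would invoke the corresponding module amenability theorem for second duals: $\ell^1(S)^{**}$ is $\ell^1(E)$-module amenable if and only if $S/\!\approx$ is finite (see \cite{abe}). Hence, if $S/\!\approx$ is finite then $\ell^1(S)^{**}$ is module amenable, and module amenability trivially implies module approximate amenability, since an inner module derivation is approximately inner via the constant net. This closes the equivalence.

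The main obstacle is the inclusion $J_{\ell^1(S)^{**}}\subseteq J^{\perp\perp}$. For $a,b\in\ell^1(S)$ the generator $(\hat a\cdot\alpha)\,\square\,\hat b-\hat a\,\square\,(\alpha\cdot\hat b)$ is the canonical image of $(a\cdot\alpha)b-a(\alpha\cdot b)\in J$, and so it lies in $J^{\perp\perp}$. For arbitrary $F,G\in\ell^1(S)^{**}$ I would use Goldstine's theorem to choose nets $\hat a_i\stackrel{w^*}{\longrightarrow}F$ and $\hat b_j\stackrel{w^*}{\longrightarrow}G$ and pass to the limit one factor at a time. The delicate point is that the first Arens product is always weak$^*$-continuous in its left variable, but in its right variable only when the left factor comes from $\widehat{\ell^1(S)}$; so I would first send $j\to\infty$ with $\hat a_i$ fixed, and only afterwards send $i\to\infty$, at each stage invoking the weak$^*$-continuity of the module actions $H\mapsto\alpha\cdot H$ and $H\mapsto H\cdot\alpha$. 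Since $J^{\perp\perp}$ is weak$^*$-closed, every such limit stays inside it, which yields the inclusion and hence the ideal containment required by Proposition \ref{pro3+}.
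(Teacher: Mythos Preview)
Your proposal is correct and follows essentially the same route as the paper: for the forward direction you apply Proposition~\ref{pro3+} to the quotient $\ell^1(S)^{**}/J^{\perp\perp}\cong\ell^1(S/\!\approx)^{**}$ and invoke \cite[Theorem 3.3]{GhL}, and for the converse you use the module amenability result of \cite{abe}. The paper handles the forward direction more tersely by simply citing ``the same method applied in the proof of \cite[Theorem 3.4]{abe}'', whereas you spell out explicitly the trivial left action on the bidual, the inclusion $J_{\ell^1(S)^{**}}\subseteq J^{\perp\perp}$ via Goldstine and the one-sided weak$^*$-continuity of the first Arens product, and the unitality of the quotient; this is exactly the content being invoked, so the two arguments coincide.
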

\begin{proof}
Let $\ell^{1}(S)^{**}$ be $\ell^1(E)$-module approximately
amenable. Using the same method applied in the proof of \cite[Theorem
3.4]{abe} for module approximate amenability, it follows from Proposition
\ref{pro3+} that $\ell^{1}(S)^{**}/J^{\perp\perp}\cong
\ell^1(S/\approx)^{**}$ is approximately amenable. Therefore,
$S/\approx$ is finite by \cite[Theorem 3.3]{GhL}. The converse is
clear by \cite[Theorem 3.4]{abe} and \cite[Theorem 2.11]{HPA}.
\end{proof}

\begin{ex}\label{e1}
\emph {(i) Let $\mathcal C$ be the bicyclic semigroup in two generators. Then
$\frac{\mathcal C}{\approx}\cong \mathbb{Z}$, by \cite{abe}, is amenable
and thus $\ell^1(\mathcal C)$ is $\ell^1(E)$-module approximately
(contractible) amenable but not $\ell^1(E)$-module contractible,
by \cite[Theorem 3.7]{HPA}. However, $\ell^1(\mathcal C)$ is not
approximately amenable in the classical sense \cite[Theorem]{Ghz}.
Also, since $\mathcal C/\approx$ is infinite, by Theorem \ref{t5},
${\ell ^{1}}(\mathcal C)^{**}$ is not ${\ell ^{1}}(E)$-module approximately amenable.}

\emph {(ii) Let $G$ be a group and $I$ a non-empty set. Then
for $S=\mathcal{M}$($G, I$), the {\it Brandt inverse semigroup}
corresponding to the group $G$ and the index set $I$, it is shown in
\cite{HPA} that $S/\approx$ is the trivial group. Therefore, ${\ell
^{1}}(S)^{**}$ is module approximately amenable and ${\ell
^{1}}(S)$ is module approximately (contractible) amenable. On the
other hand,  ${\ell ^{1}}(S)$ is approximately amenable if and
only if $I$ is finite and $G$ is amenable \cite[Theorem 4.5]{mpo}.}
\end{ex}


\section{\bf {\bf \em{\bf Module uniform approximate (contractibility) amenability}}}
\vskip 0.4 true cm

For a Banach algebra $\mathcal{A}$, let $\mathcal{A}^{op}$ denote the algebra $\mathcal A$ with reversed multiplication, called the {\it opposite algebra} of $\mathcal A$.  If $\mathcal A$ is a commutative Banach $\mathfrak A$-bimodule it can be easily verified that $\mathcal A^{op}$ is also a commutative Banach $\mathfrak A$-bimodule. Also, it is easy to see that, in this case, $\mathcal A\widehat{\otimes}_{\mathfrak A}\mathcal A^{op}$ is a Banach algebra with the following product rule: $$(a{\otimes}b)\cdot (c{\otimes}d)= (ac){\otimes}(db) \qquad (a,b,c,d\in\mathcal A).$$

Here, $K=\ker\tilde{\omega}_{\mathcal A}$ is an $\mathcal A$-$\mathfrak A$-submodule of $\mathcal A\widehat{\otimes}_{\mathfrak A}\mathcal A$. In fact, $K$ is a left ideal in $\mathcal A\widehat{\otimes}_{\mathfrak A}\mathcal A^{op}$ which is called the {\it diagonal ideal}.

Let $\mathcal A$ be a Banach algebra and a commutative Banach
$\mathfrak A$-bimodule, and let $X$ be a commutative Banach
$\mathcal A$-$\mathfrak A$-module. Then with the action defined by $$(a\otimes
b)\cdot x= a\cdot x \cdot b \qquad (a, b\in
\mathcal A, \, x\in X),$$ $X$ becomes a left Banach
$\mathcal A\widehat{\otimes}\mathcal A^{op}$-module. Now for $a, b\in \mathcal A$,
$\alpha\in\mathfrak A$, $x\in X$, we get
$$\begin{array}{ll} [(a\cdot \alpha\otimes b)-(a\otimes \alpha \cdot b)]\cdot x
& =(a\cdot \alpha)\cdot x \cdot b- a\cdot x\cdot
(\alpha\cdot b) \\ & =a\cdot (\alpha\cdot x)\cdot b- a\cdot (x\cdot
\alpha) \cdot b \\ & = a\cdot (x\cdot \alpha)\cdot b- a\cdot
(x\cdot \alpha)\cdot b \\ & = 0,
\end{array}$$
and so $I$ has zero action on $X$ and thus $X$ is a left Banach
$\mathcal A\widehat{\otimes}_{\mathfrak A}\mathcal A^{op}$-module. Also,
$$\begin{array}{ll} (a\otimes b)\cdot (\alpha\cdot x)-((a\otimes b)\cdot \alpha)\cdot x
& =(a\otimes b)\cdot (\alpha\cdot x)-(a\otimes b\cdot
\alpha)\cdot x \\ & =a\cdot (\alpha\cdot x)\cdot b- a\cdot x\cdot
(b\cdot\alpha) \\ & =a\cdot (x\cdot \alpha)\cdot b- a\cdot x\cdot
(\alpha \cdot b) \\ & = a\cdot (x\cdot \alpha)\cdot b- a\cdot
(x\cdot \alpha)\cdot b \\ & = 0,
\end{array}$$
hence $X$ is a commutative left Banach $\mathcal A\widehat{\otimes}_{\mathfrak A}\mathcal A^{op}$-$\mathfrak A$-module.

\begin{prop} \label{p2}
Let $\mathcal A$ be a commutative Banach $\mathfrak A$-module, and let $X, Y$ be commutative Banach $\mathcal A$-$\mathfrak A$-modules.
\begin{enumerate}
\item[(i)] {If $\phi: X\to Y$ is a bounded $\mathcal A$-$\mathfrak A$-module homomorphism, then it is a left $\mathcal A\widehat{\otimes}_{\mathfrak A}\mathcal A^{op}$-$\mathfrak A$-module homomorphism;}
\item[(ii)] {If $Y$ is an essential $\mathcal A$-bimodule and $\varphi: X^*\to Y^*$ is a bounded right $\mathcal A\widehat{\otimes}_{\mathfrak A}\mathcal A^{op}$-$\mathfrak A$-module homomorphism, then $\varphi$ is an $\mathcal A$-$\mathfrak A$-module homomorphism.}
\end{enumerate}
\end{prop}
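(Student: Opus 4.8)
The plan is to verify both intertwining statements directly from the dual module structures, handling (i) by a short formal computation and reserving the essentiality hypothesis for the genuinely delicate part (ii).

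For (i), since $\phi$ is already assumed to be an $\mathfrak A$-bimodule homomorphism, it only remains to check compatibility with the left $\mathcal A\widehat{\otimes}_{\mathfrak A}\mathcal A^{op}$-action, and by boundedness and density of the elementary tensors it suffices to do this on a single $a\otimes b$. First I would compute, for $a,b\in\mathcal A$ and $x\in X$,
$$\phi((a\otimes b)\cdot x)=\phi(a\cdot x\cdot b)=a\cdot\phi(x\cdot b)=a\cdot\phi(x)\cdot b=(a\otimes b)\cdot\phi(x),$$
where the two middle equalities use that $\phi$ is simultaneously a left and a right $\mathcal A$-module homomorphism. Well-definedness on the quotient by $I$ is already guaranteed by the computation preceding the proposition, so (i) follows at once.

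For (ii) the first step is to unwind what the hypothesis on $\varphi$ actually encodes. Dualizing the action $(a\otimes b)\cdot x=a\cdot x\cdot b$ by means of the dual actions recorded in Section 2 gives the identity $f\cdot(a\otimes b)=b\cdot f\cdot a$ on $X^{*}$ (and the analogous one on $Y^{*}$), so the assumption that $\varphi$ is a right $\mathcal A\widehat{\otimes}_{\mathfrak A}\mathcal A^{op}$-module homomorphism collapses to the single relation
\begin{equation*}
\varphi(b\cdot f\cdot a)=b\cdot\varphi(f)\cdot a \qquad (a,b\in\mathcal A,\ f\in X^{*}).\tag{$\ast$}
\end{equation*}
I would record $(\ast)$ as the central tool. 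Since $\varphi$ is also an $\mathfrak A$-module homomorphism by hypothesis, it then suffices to produce the two missing one-sided identities $\varphi(a\cdot f)=a\cdot\varphi(f)$ and $\varphi(f\cdot a)=\varphi(f)\cdot a$.

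The difficulty, and the only non-formal point, is that $(\ast)$ controls only the \emph{sandwiched} action $f\mapsto b\cdot f\cdot a$ and never a one-sided action in isolation; decoupling it is exactly where essentiality of $Y$ must enter. To prove, say, $\varphi(f\cdot a)=\varphi(f)\cdot a$, I would test both functionals against a generic element of the dense subspace $\mathrm{span}\,(\mathcal A\cdot Y\cdot\mathcal A)$, i.e. on $y=c\cdot w\cdot d$ with $c,d\in\mathcal A$ and $w\in Y$. Transposing the two outer factors onto $Y^{*}$ rewrites $\langle\varphi(f\cdot a),c\cdot w\cdot d\rangle$ as $\langle d\cdot\varphi(f\cdot a)\cdot c,\,w\rangle$; now the element $d\cdot\varphi(f\cdot a)\cdot c$ carries a factor on each side, so $(\ast)$ applies and turns it into $\varphi(d\cdot f\cdot ac)$, and a second application of $(\ast)$ produces $d\cdot\varphi(f)\cdot ac$. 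Running the identical bookkeeping on $\langle\varphi(f)\cdot a,\,c\cdot w\cdot d\rangle$ yields the same expression, so the two functionals agree on a dense set and hence coincide. The identity $\varphi(a\cdot f)=a\cdot\varphi(f)$ is obtained the same way, with $a$ absorbed into the outer left factor before applying $(\ast)$. The essential obstacle is precisely this need to supply auxiliary factors on both sides in order to invoke $(\ast)$, which is why two-sided essentiality of $Y$ is indispensable.
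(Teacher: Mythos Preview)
Your proposal is correct and follows essentially the same route as the paper. For (i) the computations are identical; for (ii) you first unpack the right $\mathcal A\widehat{\otimes}_{\mathfrak A}\mathcal A^{op}$-module homomorphism hypothesis into the sandwich identity $(\ast)$ and then test on $c\cdot w\cdot d$, whereas the paper keeps the tensor notation and instead records the auxiliary identity $(a\cdot f)\cdot(b\otimes c)=f\cdot(b\otimes ca)$ before testing on $b\cdot y\cdot c$ --- but the two applications of the module-homomorphism property and the use of essentiality are the same in both arguments.
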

\begin{proof}
(i) Let $a,b\in \mathcal A$ and $x\in X$. Then
$$\phi((a\otimes b)\cdot x)=\phi(a\cdot x\cdot b)=a\cdot\phi(x)\cdot b=(a\otimes b)\cdot \phi(x),$$ and so $\phi$ is a left $\mathcal A\widehat{\otimes}\mathcal A^{op}$-module homomorphism. Therefore, $\phi$ is a left $\mathcal A\widehat{\otimes}_{\mathfrak A}\mathcal A^{op}$-$\mathfrak A$-module homomorphism.

(ii) Let $a,b,c\in \mathcal A$, $f\in X^*$, and $y\in Y$. Then
$$\begin{array}{ll} \langle x, (a\cdot f)\cdot (b\otimes c)\rangle & = \langle (b\otimes c)\cdot x, (a\cdot f)\rangle= \langle b\cdot x\cdot c, a\cdot f \rangle \\ &=  \langle b\cdot x\cdot (ca), f\rangle = \langle (b\otimes ca)\cdot x, f\rangle \\ &= \langle  x, f\cdot (b\otimes ca)\rangle, \end{array}$$ and thus $(a\cdot f)\cdot (b\otimes c)=f\cdot (b\otimes ca)$. Now
$$\begin{array}{ll} \langle b\cdot y\cdot c, \varphi(a\cdot f)\rangle & =  \langle (b\otimes c)\cdot y, \varphi(a\cdot f)\rangle =  \langle y, (\varphi(a\cdot f))\cdot (b\otimes c)\rangle \\ & = \langle y, \varphi((a\cdot f)\cdot (b\otimes c))\rangle= \langle y, \varphi(f\cdot (b\otimes ca))\rangle \\ & =  \langle y, \varphi(f)\cdot (b\otimes ca)\rangle =  \langle (b\otimes ca)\cdot y, \varphi(f)\rangle \\ & =  \langle b\cdot y\cdot ca, \varphi(f)\rangle =  \langle (b\cdot y\cdot c)\cdot a, \varphi(f)\rangle  \\ &=   \langle b\cdot y \cdot c, a\cdot \varphi(f)\rangle. \end{array}$$ 

The essentiality of $Y$ implies $\varphi(a\cdot f)=a\cdot \varphi(f)$. Similarly, we can show that $\varphi(f\cdot a)= \varphi(f)\cdot a$. Hence $\varphi$ is an $\mathcal A$-$\mathfrak A$-module homomorphism.
\end{proof}

\begin{defn}
Let $X$, $Y$ and $Z$ be commutative Banach $\mathcal A$-$\mathfrak A$-modules and let $f: X\to Y$ and $g: Y\to Z$ be bounded  $\mathcal A$-$\mathfrak A$-module homomorphisms. We say that the short exact sequence $$0\longrightarrow X \stackrel{f}{\longrightarrow} Y \stackrel{g}{\longrightarrow} Z \longrightarrow 0,$$
{\it splits} as $\mathcal A$-$\mathfrak A$-modules if $g$ has a bounded right inverse that is also an $\mathcal A$-$\mathfrak A$-module homomorphism.
\end{defn}

Let $\mathcal A$ be a Banach algebra and a commutative Banach $\mathfrak A$-module with a bounded approximate identity and set $K=\ker\tilde{\omega}_{\mathcal A}$. Then it can be easily seen that the sequence
$$\Pi^*: 0\longrightarrow \mathcal A^* \stackrel{\tilde{\omega}_{\mathcal A}^*}{\longrightarrow} (\mathcal A\widehat{\otimes}_{\mathfrak A}\mathcal A)^* \stackrel{\iota^*}{\longrightarrow} K^* \longrightarrow 0,$$
and $$\Pi^{op*}: 0\longrightarrow \mathcal A^* \stackrel{\tilde{\omega}_{\mathcal A}^*}{\longrightarrow} (\mathcal A\widehat{\otimes}_{\mathfrak A}\mathcal A^{op})^* \stackrel{\iota^*}{\longrightarrow} K^* \longrightarrow 0,$$
are exact, where $\iota$ denotes the inclusion map $K \hookrightarrow \mathcal A\widehat{\otimes}_{\mathfrak A}\mathcal A$.

An element $M\in (\mathcal A\widehat{\otimes}_{\mathfrak A}\mathcal A)^{**}$ is called a {\it{module virtual diagonal}} if $M\cdot a=a\cdot M$, $a\cdot\tilde{\omega}_{\mathcal A}^{**}(M)=\hat{a}$ for $a\in \mathcal A$; \cite[Definition 2.2]{am1}. The following proposition is the module version of \cite[Theorem 1.3]{CL}.

\begin{prop} \label{p2+}
Let $\mathcal A$ be a Banach algebra and a commutative Banach $\mathfrak A$-bimodule.
Then $\mathcal A$ is module amenable if and only if $\mathcal A$ has a bounded approximate identity and the exact sequence $\Pi^*$ (or $\Pi^{op*}$) splits as $\mathcal A$-$\mathfrak A$-modules.
\end{prop}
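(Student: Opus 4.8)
The proof will follow the template of the classical Curtis--Loy theorem \cite[Theorem 1.3]{CL}, with the diagonal structure carried by the algebra $\mathcal A\widehat{\otimes}_{\mathfrak A}\mathcal A^{op}$ and with Proposition \ref{p2} playing the role of the classical identification of $\mathcal A$-bimodule maps with $\mathcal A\widehat{\otimes}\mathcal A^{op}$-module maps. I will argue for the sequence $\Pi^{op*}$, the case of $\Pi^*$ being analogous. First note that, because $\mathcal A$ is a commutative $\mathfrak A$-bimodule acting on itself, the compatibility relations force $J=0$, so $\tilde\omega_{\mathcal A}$ is the genuine product map $\mathcal A\widehat{\otimes}_{\mathfrak A}\mathcal A^{op}\to\mathcal A$; it is a left $\mathcal A\widehat{\otimes}_{\mathfrak A}\mathcal A^{op}$-module homomorphism for the action $(a\otimes b)\cdot c=acb$ on $\mathcal A$, and $K=\ker\tilde\omega_{\mathcal A}$ is exactly the diagonal left ideal, so that $\Pi^{op}$ is a sequence of left $\mathcal A\widehat{\otimes}_{\mathfrak A}\mathcal A^{op}$-modules and $\Pi^{op*}$ a sequence of right ones. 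The whole argument rests on the observation that the centrality condition $a\cdot M=M\cdot a$ of a module virtual diagonal says precisely that the generators $a\otimes 1-1\otimes a\in K$ annihilate $M$ in the bidual action. The plan is to establish, in the presence of a bounded approximate identity, the equivalence
\[
\exists\ \text{a module virtual diagonal } M\in(\mathcal A\widehat{\otimes}_{\mathfrak A}\mathcal A)^{**}\iff \Pi^{op*}\ \text{splits as } \mathcal A\text{-}\mathfrak A\text{-modules},
\]
and then to combine it with the characterisation of module amenability through module virtual diagonals \cite{am1}.

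For the direction ``module amenable $\Longrightarrow$ bounded approximate identity and splitting'', I would first invoke \cite{am1} to obtain a module virtual diagonal $M$; since $\tilde\omega_{\mathcal A}^{**}(M)$ is then a mixed identity, $\mathcal A$ automatically possesses a bounded approximate identity, which (as recorded before the statement) makes $\Pi^{op*}$ exact. From $M$ I would manufacture the splitting by producing the right $\mathcal A\widehat{\otimes}_{\mathfrak A}\mathcal A^{op}$-module projection $Q$ of $(\mathcal A\widehat{\otimes}_{\mathfrak A}\mathcal A^{op})^*$ onto $K^{\perp}=\tilde\omega_{\mathcal A}^*(\mathcal A^*)$ obtained by averaging against $M$, say $Q\psi=\tilde\omega_{\mathcal A}^*(f_\psi)$ with $\langle f_\psi,a\rangle=\langle a\cdot\psi,M\rangle$. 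That $Q$ maps into $K^{\perp}$ and is the identity there uses the identity condition $a\cdot\tilde\omega_{\mathcal A}^{**}(M)=\hat a$, while the centrality of $M$ makes $Q$ right $\mathcal A\widehat{\otimes}_{\mathfrak A}\mathcal A^{op}$-linear. Decomposing $(\mathcal A\widehat{\otimes}_{\mathfrak A}\mathcal A^{op})^*=K^{\perp}\oplus\ker Q$ and inverting $\iota^*$ on $\ker Q$ yields a right inverse $\rho$ of $\iota^*$; as $\mathcal A\widehat{\otimes}_{\mathfrak A}\mathcal A^{op}$ is an essential $\mathcal A$-bimodule under the approximate identity, Proposition \ref{p2}(ii) then upgrades $\rho$ from a right module map over $\mathcal A\widehat{\otimes}_{\mathfrak A}\mathcal A^{op}$ to a genuine $\mathcal A$-$\mathfrak A$-module homomorphism, giving the required splitting.

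For the converse, assume a bounded approximate identity and a module splitting $\rho$ of $\Pi^{op*}$. Then $\mathrm{id}-\rho\,\iota^*$ is a projection onto $K^{\perp}$, and since $\tilde\omega_{\mathcal A}^*$ is injective with range $K^{\perp}$, this projection can be written as $\tilde\omega_{\mathcal A}^*\circ p$ for a bounded $\mathcal A$-$\mathfrak A$-module left inverse $p$ of $\tilde\omega_{\mathcal A}^*$, whence $p\,\tilde\omega_{\mathcal A}^*=\mathrm{id}$ and so $\tilde\omega_{\mathcal A}^{**}p^*=\mathrm{id}$. Taking a mixed identity $E\in\mathcal A^{**}$ furnished by the approximate identity, I would set $M=p^*(E)$. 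Then $\tilde\omega_{\mathcal A}^{**}(M)=\tilde\omega_{\mathcal A}^{**}p^*(E)=E$ gives the identity condition $a\cdot\tilde\omega_{\mathcal A}^{**}(M)=\hat a$, and the fact that $\rho$, hence $p$ and $p^*$, respects the $\mathcal A\widehat{\otimes}_{\mathfrak A}\mathcal A^{op}$-module structure --- by Proposition \ref{p2}(i) --- yields the centrality $a\cdot M=M\cdot a$. Thus $M$ is a module virtual diagonal and $\mathcal A$ is module amenable by \cite{am1}.

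The step I expect to be the real obstacle is verifying that the averaging map $Q$ (and dually $p^*$) is genuinely $\mathcal A$-$\mathfrak A$-module linear rather than merely bounded linear: this is exactly where Proposition \ref{p2} and the identification of the centrality condition with annihilation of the diagonal generators $a\otimes 1-1\otimes a$ must be applied with care, alongside the passage between the various weak$^*$-topologies needed to check $\tilde\omega_{\mathcal A}^{**}(M)=E$ and to move a net realising $M$ back into $\mathcal A\widehat{\otimes}_{\mathfrak A}\mathcal A$ when confirming the projection identities.
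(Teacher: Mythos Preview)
Your proposal is correct and follows essentially the same Curtis--Loy template as the paper's proof. The paper too extracts a module virtual diagonal $M$ from \cite{am1}, builds from it the averaging map $\rho(f)(a)=\langle a\cdot f,M\rangle$ as a left inverse of $\tilde\omega_{\mathcal A}^{*}$ (you equivalently build the complementary projection onto $K^{\perp}$ and pass to a right inverse of $\iota^{*}$ via \cite[Proposition~1.1]{CL}), and for the converse applies the adjoint of the splitting to a $w^{*}$-cluster point of $(e_i\otimes e_i+I)$ to produce the virtual diagonal, which is the tensor-side incarnation of your $p^{*}(E)$ with $E\in\mathcal A^{**}$ a mixed identity. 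One small remark: your reference to the ``generators $a\otimes 1-1\otimes a$'' tacitly assumes a unit, so in the actual write-up you should replace these by $a\otimes e_i-e_i\otimes a$ and pass to the limit, exactly as the bounded approximate identity already available allows.
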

\begin{proof}
We follow the standard argument of \cite[Theorem 1.3]{CL}. If $\mathcal A$ is module amenable, then it has a bounded approximate identity by \cite[Proposition 2.2]{am1}. Let $M\in (\mathcal A\widehat{\otimes}_{\mathfrak A}\mathcal A)^{**}$ be a module virtual diagonal for $\mathcal A$ that exists by \cite[Theorem 2.1]{am1}. Define the map $\rho: (\mathcal A\widehat{\otimes}_{\mathfrak A}\mathcal A)^{*} \to \mathcal A^*$ by $$\langle a, \rho(f) \rangle= \langle a\cdot f, M\rangle \qquad (a\in \mathcal A, f\in (\mathcal A\widehat{\otimes}_{\mathfrak A}\mathcal A)^{*}).$$ 

For each $a\in \mathcal A, f\in\mathcal A^{*}$,
$$\begin{array}{ll}
\langle a, \rho\circ \tilde{\omega}_{\mathcal A}^*(f)\rangle \!\!\! & = \langle  a\cdot (f\circ \tilde{\omega}_{\mathcal A}), M\rangle =\lim_i \langle  e_i\otimes e_i+I, a\cdot (f\circ \tilde{\omega}_{\mathcal A}) \rangle \vspace{0.1cm} \\ & =\lim_i \langle  e_i\otimes e_i\cdot a+I, f\circ \tilde{\omega}_{\mathcal A} \rangle = \lim_i \langle  e_i^2\cdot a, f \rangle = \langle a, f \rangle.
\end{array}$$

So $\rho$ is a left inverse for $\tilde{\omega}_{\mathcal A}^*$. Also, clearly, $\rho$ is an $\mathcal A$-$\mathfrak A$-module homomorphism. It follows from \cite[Proposition 1.1]{CL} that the exact sequence $\Pi^*$ splits as $\mathcal A$-$\mathfrak A$-modules.

Conversely, let $\Pi^*$ splits as $\mathcal A$-$\mathfrak A$-modules with a splitting map $\theta: K^*\to (\mathcal A\widehat{\otimes}_{\mathfrak A}\mathcal A)^{*}$ and let $(e_i)$ be a bounded approximate identity for $\mathcal A$. Assume that $E$ is a $w^*$-cluster point of the bounded net $(e_i\otimes e_i+I)$ in $(\mathcal A\widehat{\otimes}_{\mathfrak A}\mathcal A)^{**}$. It is easy to check that $M=\theta^*\circ\tilde\omega^{**}_{\mathcal A}(E)$ is a module virtual diagonal for $\mathcal A$, and thus $\mathcal A$ is module amenable by \cite[Theorem 2.1]{am1}. The assertion for $\Pi^{op*}$ is clear.
\end{proof}

The following theorem characterizes the module amenability of commutative Banach $\mathfrak A$-bimodules in terms of existence of a bounded right approximate identity for $K=\ker\tilde{\omega}_{\mathcal A}$, where $\tilde{\omega}_{\mathcal A}: \mathcal A\widehat{\otimes}_{\mathfrak A}\mathcal A^{op} \to \mathcal A$ is the usual multiplication map.

\begin{thm} \label{t20}
Let $\mathcal A$ be a Banach algebra and a commutative Banach
$\mathfrak A$-bimodule. Then $\mathcal A$ is module amenable if and only if
$\mathcal A$ has a bounded approximate identity and $K=\ker
\tilde{\omega}_{\mathcal A}$ has a bounded right approximate identity.
\end{thm}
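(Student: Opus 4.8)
The plan is to route everything through Proposition \ref{p2+}, which already identifies module amenability (in the presence of a bounded approximate identity) with the splitting of the dual sequence $\Pi^{op*}$ as $\mathcal A$-$\mathfrak A$-modules. Writing $B=\mathcal A\widehat{\otimes}_{\mathfrak A}\mathcal A^{op}$ and recalling that $K$ is the diagonal left ideal of $B$, the relevant sequence is $\Pi^{op*}\colon 0\to\mathcal A^*\xrightarrow{\tilde\omega_{\mathcal A}^*}B^*\xrightarrow{\iota^*}K^*\to 0$, so splitting amounts to the existence of a bounded two-sided $\mathcal A$-$\mathfrak A$-module projection $Q\colon B^*\to B^*$ whose range is $K^\perp=\tilde\omega_{\mathcal A}^*(\mathcal A^*)$ (cf. \cite[Proposition 1.1]{CL}). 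Since the bounded approximate identity is granted by Proposition \ref{p2+} in the forward direction and is a hypothesis in the converse, the entire theorem reduces to the single equivalence: under the standing assumption that $\mathcal A$ has a bounded approximate identity, $\Pi^{op*}$ splits as $\mathcal A$-$\mathfrak A$-modules if and only if $K$ has a bounded right approximate identity. This is the module analogue of the Curtis--Loy description of amenability \cite{CL}, and I would prove the two implications separately.

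For the implication from a right approximate identity to the splitting, I would take a bounded net $(u_\lambda)\subset K$ with $k\,u_\lambda\to k$ for every $k\in K$ and let $E\in K^{\perp\perp}\subseteq B^{**}$ be a weak$^*$ cluster point. A short computation with the first Arens product shows that $E$ is a right identity for $\hat K$, and, by weak$^*$ continuity of right multiplication, for all of $K^{\perp\perp}$. Setting $\langle b,\Phi(f)\rangle=\langle E,L_b f\rangle$, where $L_b f$ is the left translate $\langle L_b f,c\rangle=\langle f,bc\rangle$, one checks at once that $\Phi(f)$ agrees with $f$ on $K$ for every $f$ and vanishes on $K^\perp$; hence $Q:=\mathrm{id}_{B^*}-\Phi$ is a bounded projection of $B^*$ onto $K^\perp$. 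The genuine work is to make $Q$ (or a variant of it) a two-sided $\mathcal A$-$\mathfrak A$-module map: a bare right identity for $K$ controls only one side, so I would symmetrize by pairing $(u_\lambda)$ with the bounded approximate identity of $\mathcal A$, exactly as one turns a one-sided diagonal into an approximately central one in the classical theory. Once $Q$ is a module projection, Proposition \ref{p2+} delivers module amenability.

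For the converse I would run this backwards. Module amenability gives, through Proposition \ref{p2+}, a module splitting of $\Pi^{op*}$, equivalently a bounded module projection of $B^*$ onto $K^\perp$; from this (or from the associated module virtual diagonal of \cite[Theorem 2.1]{am1}), together with the mixed identity of $\mathcal A^{**}$ coming from the bounded approximate identity, one extracts an element $E\in B^{**}$ that lies in the weak$^*$ closure of a bounded subset of $K$ and acts as a right identity on $\hat K$. It then remains to descend from this second-dual datum to an honest right approximate identity inside $K$: Goldstine's theorem supplies a bounded net $(u_\lambda)\subset K$ with $u_\lambda\to E$ in the weak$^*$ topology, so $k\,u_\lambda\to k$ weakly for each $k\in K$, and Mazur's theorem, applied to suitable convex combinations indexed by finite subsets of $K$, upgrades this weak convergence to norm convergence, yielding the desired bounded right approximate identity.

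The step I expect to be the main obstacle is the two-sided module-linearity in the forward implication, equivalently the approximate centrality that distinguishes amenability from the existence of one-sided bounded approximate identities. A plain bounded right approximate identity for $K$ only governs the right $B$-action, whereas the splitting of $\Pi^{op*}$ must respect the full $\mathcal A$-$\mathfrak A$-bimodule structure, so the cluster point $E$ must be chosen and combined with the bounded approximate identity of $\mathcal A$ so as to be compatible with both actions; here the commutativity of $\mathcal A$ as an $\mathfrak A$-bimodule, established at the beginning of this section, is what keeps the $\mathfrak A$-actions aligned. The analytic counterpart in the converse, the weak$^*$-to-norm passage via Goldstine and Mazur, is routine by comparison.
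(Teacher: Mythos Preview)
Your overall strategy---reduce to Proposition~\ref{p2+} and show that $\Pi^{op*}$ splits as $\mathcal A$-$\mathfrak A$-modules iff $K$ has a bounded right approximate identity---is exactly the paper's, and your converse direction is essentially the paper's argument (they pass the right identity $E$ of $B^{**}$ through $\theta^*$, where $\theta$ is the splitting map, to get a right identity for $(K^{**},\Box)$; the descent to $K$ is the same standard Goldstine/Mazur step you describe).

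The gap is in the forward direction, and it is precisely the step you flag as the main obstacle. You correctly observe that the map $\Phi$ (equivalently, the paper's $\rho\colon K^*\to B^*$, $\langle m,\rho(f)\rangle=\langle E,f\cdot m\rangle$) is automatically a \emph{right} $B$-module homomorphism, but you then misdiagnose what is needed to upgrade this to a two-sided $\mathcal A$-$\mathfrak A$-module map. Your proposed fix---``symmetrize by pairing $(u_\lambda)$ with the bounded approximate identity of $\mathcal A$, exactly as one turns a one-sided diagonal into an approximately central one''---does not correspond to any concrete construction here; there is no centrality problem to solve, and the analogy with approximate diagonals does not translate.

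The point you are missing is that the right $B$-module structure on a dual $X^*$ already encodes \emph{both} $\mathcal A$-actions, because the left $B=\mathcal A\widehat{\otimes}_{\mathfrak A}\mathcal A^{op}$-action on $X$ is $(a\otimes b)\cdot x=a\cdot x\cdot b$. This is the content of Proposition~\ref{p2}(ii): any bounded right $B$-$\mathfrak A$-module homomorphism $X^*\to Y^*$ with $Y$ an essential $\mathcal A$-bimodule is automatically an $\mathcal A$-$\mathfrak A$-module homomorphism. The paper applies this with $Y=\mathcal A$ (essential because $\mathcal A$ has a bounded approximate identity): from $\rho$ one obtains, via \cite[Proposition~1.1]{CL}, a right $B$-$\mathfrak A$-module left inverse $\theta\colon B^*\to\mathcal A^*$ for $\tilde\omega_{\mathcal A}^*$, and Proposition~\ref{p2}(ii) then gives the required two-sided $\mathcal A$-$\mathfrak A$-module linearity for free. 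No symmetrization is needed; the commutativity of $\mathcal A$ as an $\mathfrak A$-bimodule enters only in setting up the $B$-module framework at the start of the section, not in any ad hoc averaging.
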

\begin{proof}
Let $(e_i)$ be a bounded right approximate identity for $K$. Without loss of generality suppose that $(e_i)$ converges to an element $E\in K^{**}$ in the $w^*$-topology of $K^{**}$. Define the bounded linear map $\rho: K^*\to (\mathcal A\widehat{\otimes}_{\mathfrak A}\mathcal A^{op})^*$ by
$$\langle m, \rho(f)\rangle=\langle E, f\cdot m\rangle \qquad (f\in K^*, \, m\in \mathcal A\widehat{\otimes}_{\mathfrak A}\mathcal A^{op}).$$

 Now for $n\in K$ and $f\in K^*$ we have
$$\langle n, (\iota^*\circ \rho)(f) \rangle= \langle E, f\cdot n\rangle=\lim_i \langle e_i, f\cdot n\rangle= \lim_i \langle n\cdot e_i, f\rangle= \langle n, f\rangle,$$ and so $\iota^*\circ \rho= \text{id}_{K^*}$, where $\iota$ denotes the inclusion map $K \hookrightarrow \mathcal A\widehat{\otimes}_{\mathfrak A}\mathcal A^{op}$. Also for $m,n \in \mathcal A\widehat{\otimes}_{\mathfrak A}\mathcal A^{op}$ and $f\in K^*$,
$$\langle n, \rho(f\cdot m)\rangle=\langle E, f\cdot mn\rangle=\langle mn, \rho(f)\rangle= \langle n, \rho(f)\cdot m\rangle,$$ and thus $\rho$ is a right $\mathcal A\widehat{\otimes}_{\mathfrak A}\mathcal A^{op}$-module homomorphism. Clearly, it is an $\mathfrak A$-bimodule homomorphism too. From \cite[Proposition 1.1]{CL} it follows that there is a bounded right $\mathcal A\widehat{\otimes}_{\mathfrak A}\mathcal A^{op}$-module homomorphism $\theta: (\mathcal A\widehat{\otimes}_{\mathfrak A}\mathcal A)^*\to\mathcal A^*$ which is also an $\mathfrak A$-bimodule homomorphism and $\theta\circ\tilde{\omega}^*_{\mathcal A}=\text {id}_{\mathcal A^*}$. Since $\mathcal A$ has a bounded approximate identity, $\mathcal A$ is an essential $\mathcal A$-bimodule, and so by Proposition \ref{p2}, $\theta$ is an $\mathcal A$-$\mathfrak A$-module homomorphism. Therefore, the exact sequence $\Pi^*$ splits as $\mathcal A$-$\mathfrak A$-modules, and hence $\mathcal A$ is module amenable, by Proposition \ref{p2+}.

Conversely, by Proposition \ref{p2+}, the exact sequence $\Pi^{op*}$ splits. Let $\theta: K^*\to (\mathcal A\widehat{\otimes}_{\mathfrak A}\mathcal A^{op})^*$ be
an $\mathcal A$-$\mathfrak A$-module homomorphism (in particular, a left $\mathcal A\widehat{\otimes}_{\mathfrak A}\mathcal A^{op}$-$\mathfrak A$-module homomorphism, by Proposition \ref{p2})
which is a right inverse for $\iota^*$. Since $\mathcal A$ is commutative as an $\mathfrak A$-bimodule, by \cite[Proposition 2.2]{am1} it has a bounded approximate identity,
say $(e_i)$. Then, $(e_i\otimes e_i+I)$ is a bounded approximate identity for $\mathcal A\widehat{\otimes}_{\mathfrak A}\mathcal A^{op}$. Let $E$ be a $w^*$-cluster
point of the latter net in $(\mathcal A\widehat{\otimes}_{\mathfrak A}\mathcal A^{op})^{**}$. Then $E$ is a right identity for $((\mathcal A\widehat{\otimes}_{\mathfrak A}\mathcal A^{op})^{**}, \Box)$.
Now it can be routinely checked that $\theta^*(E)$ is a right identity for $(K^{**}, \Box)$, hence $K$ has a bounded right approximate identity by the
standard argument.
\end{proof}

The following proposition shows that module uniformly approximately amenable Banach algebras that are commutative as $\mathfrak A$-bimodules, have a bounded approximate identity. The corresponding result concerning module amenability and uniform approximate amenability are obtained in \cite[Proposition 2.2]{am1} and \cite[Theorem 4.2]{GhL}, respectively.

\begin{prop} \label{p3}
Let $\mathcal A$ be a Banach algebra and a commutative Banach $\mathfrak A$-bimodule. If $\mathcal A$ is module uniformly approximately amenable, then it has a bounded approximate identity.
\end{prop}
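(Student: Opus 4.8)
The plan is to mimic the classical argument of Ghahramani–Loy \cite[Theorem 4.2]{GhL} for uniform approximate amenability, working in the module setting. The natural bimodule to test against is $X = \mathcal A$ itself, but since the module derivations land in dual modules, I would instead work with the module $X = \mathcal A^\#$, the conditional unitization, or more directly consider the quotient construction that produces a canonical module derivation whose approximate innerness, made uniform, forces the existence of an approximate identity. Concretely, I would form the commutative Banach $\mathcal A$-$\mathfrak A$-module $X = (\mathcal A/\overline{\mathcal A^2})$ with trivial right action (or a suitable rank-one type module built from a character), and exhibit a specific module derivation into $X^*$ whose uniformly approximately inner implementing net $(x_i)$ yields elements $u_i = -x_i$ in $\mathcal A$ that behave as an approximate identity.

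The key steps, in order, are as follows. First, I would observe that $\mathcal A$ acts on itself compatibly and, using that $\mathcal A$ is commutative as an $\mathfrak A$-bimodule, verify that the relevant quotient module is a genuine commutative Banach $\mathcal A$-$\mathfrak A$-module, so that module uniform approximate amenability applies to it. Second, I would identify the canonical module derivation: take the inclusion-induced map or the quotient map $\mathcal A \to X^*$ and check it is a bounded $\mathfrak A$-module derivation; the module homomorphism property with respect to $\mathfrak A$ is where the commutativity of the $\mathfrak A$-action is used. Third, apply the hypothesis: there is a net $(x_i)\subset X^*$ with $D(a) = \lim_i (a\cdot x_i - x_i\cdot a)$ \emph{uniformly} on the unit ball of $\mathcal A$. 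Fourth, translate this uniform convergence, via the duality pairing, into the statement that $\|a - a u_i\|$ (and the left-handed version) tends to $0$ uniformly in $a$ on the unit ball, where $u_i$ is the element of $\mathcal A$ (or $\mathcal A^\#$) extracted from $x_i$; uniform smallness is precisely what upgrades a weak approximate-identity condition into a genuine bounded approximate identity.

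The main obstacle I anticipate is twofold. The harder conceptual point is constructing the test module and the derivation so that the resulting net lies in $\mathcal A$ rather than merely in a dual space, and so that the uniformity hypothesis converts cleanly into norm estimates of the form $\sup_{\|a\|\le 1}\|a - a u_i\| \to 0$; this typically requires passing through $\mathcal A^\#$ to handle the lack of an identity, then using a standard functional-analytic averaging (a convexity or Mazur-type argument, as already deployed in Theorem \ref{t4}) to replace a weakly convergent net by a norm-convergent one and thereby produce a bounded left and right approximate identity, which in a Banach algebra combine into a two-sided bounded approximate identity. The second, more technical obstacle is controlling boundedness: I must ensure the implementing net $(u_i)$ is norm-bounded, which follows from the uniform convergence over the unit ball together with the uniform boundedness principle, but the bookkeeping with the two compatible actions of $\mathcal A$ and $\mathfrak A$ must be done carefully so that the $\mathfrak A$-module structure does not obstruct the extraction of the approximate identity in $\mathcal A$.
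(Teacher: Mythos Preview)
Your overall strategy—imitate the Ghahramani--Loy argument for uniform approximate amenability—is the right one, and it is what the paper does. However, your proposal is vague about the test module and, more importantly, misses the mechanism that actually converts the uniform estimate into a \emph{bounded} approximate identity.

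The paper's test module is $X=\mathcal A^{*}$ (so $X^{*}=\mathcal A^{**}$) equipped with the usual left $\mathcal A$-action and the \emph{zero} right $\mathcal A$-action; commutativity of $\mathcal A$ as an $\mathfrak A$-bimodule is what makes this a commutative $\mathcal A$-$\mathfrak A$-module. The inclusion $j:\mathcal A\to\mathcal A^{**}$ is then a module derivation, and uniform approximate innerness yields a net $(E_n)\subset\mathcal A^{**}$ with $a\Box E_n\to\hat a$ uniformly on the unit ball. Note that the implementing net lives in $\mathcal A^{**}$, not in $\mathcal A$; your plan to set $u_i=-x_i\in\mathcal A$ cannot be carried out as written, and neither a Mazur convexity argument nor the uniform boundedness principle will put you back in $\mathcal A$ or, by themselves, bound the net. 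Your candidate modules $\mathcal A^\#$ and $\mathcal A/\overline{\mathcal A^2}$ do not produce the required derivation in any obvious way.

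The step you are missing is the invertibility trick. Using $w^{*}$-continuity of $\Box$ in the first variable, the uniform convergence $a\Box E_n\to\hat a$ extends to $F\Box E_n\to F$ uniformly on the unit ball of $\mathcal A^{**}$; hence for some fixed $n$ the right-multiplication operator $\mathcal R_{E_n}$ satisfies $\|\mathcal R_{E_n}-\mathrm{id}_{\mathcal A^{**}}\|<1$ and is therefore invertible. Surjectivity produces $E\in\mathcal A^{**}$ with $E\Box E_n=E_n$, and injectivity then forces $G\Box E=G$ for all $G$, so $E$ is a genuine right identity for $(\mathcal A^{**},\Box)$. The standard argument now gives a bounded right approximate identity in $\mathcal A$. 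Repeating the argument for $\mathcal A^{op}$ (which is again a commutative $\mathfrak A$-bimodule and module uniformly approximately amenable, cf.\ \cite[Proposition 2.12]{HPA}) supplies the left side, and the two combine into a two-sided bounded approximate identity.
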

\begin{proof}
Take $\mathcal A^{**}$ as a Banach $\mathcal A$-bimodule with usual left action and zero right action. Then $\mathcal A^{**}$ is a commutative Banach $\mathcal A$-$\mathfrak A$-module and the inclusion map $j: \mathcal A\to \mathcal A^{**}$ is a module derivation. By the assumption there is a sequence $(E_n)$ in $\mathcal A^{**}$ such that $a\Box E_n \to j(a)=\hat{a}$ uniformly for $\|a\|\leq 1$. Using the $w^*$-continuity of the first Arens product in first variable, it can be easily seen that $F\Box E_n\to F$ uniformly on $\{F\in\mathcal A^{**}: \|F\|\leq 1\}$. Let $\mathcal R_G: \mathcal A^{**}\to \mathcal A^{**}$ denote the right multiplication operator by $G\in \mathcal A^{**}$. Then there is $n\in \mathbb{N}$ such that $\|\mathcal R_{E_n}-\text{id}_{\mathcal A^{**}}\|<1$. So $\mathcal R_{E_n}$ is invertible. By surjectivity there is $E\in \mathcal A^{**}$ such that $E\Box E_n=E_n$. Hence for each $G\in \mathcal A^{**}$, $\mathcal R_{E_n}(G\Box E-G)=0$ and so by injectivity of $\mathcal R_{E_n}$, $E$ is a right identity for $(\mathcal A^{**}, \Box)$. Now by the standard argument, $\mathcal A$ has a bounded right approximate identity. Applying $\mathcal A^{op}$ in place of $\mathcal A$ and \cite[Proposition 2.12]{HPA}, one can find a bounded left approximate identity for $\mathcal A$, and thus a bounded approximate identity for $\mathcal A$.
\end{proof}

Using Theorem \ref{t20} and Proposition \ref{p3}, similar to \cite[Theorem 3.1]{GhLZ}, we have the following theorem that characterizes the module uniform approximate amenability of those Banach algebras that are commutative as an $\mathfrak A$-bimodule. 

\begin{thm} \label{t22}
Let $\mathcal A$ be a Banach algebra and a commutative Banach $\mathfrak A$-bimodule. Then $\mathcal A$ is module uniformly approximately amenable if and only if it is module amenable.
\end{thm}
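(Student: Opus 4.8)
The plan is to prove the equivalence via a two-way implication, exploiting the fact that module amenability is by definition the stronger notion (inner derivations, uniform approximation being automatic once the derivation is genuinely inner). One direction is essentially free: if $\mathcal A$ is module amenable, then every module derivation into a dual module is actually inner, hence trivially module uniformly approximately inner, so module amenability implies module uniform approximate amenability. The substantive content lies in the reverse implication, and this is where I would spend the bulk of the effort.

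For the reverse direction, first I would invoke Proposition \ref{p3}: since $\mathcal A$ is module uniformly approximately amenable and commutative as an $\mathfrak A$-bimodule, it has a bounded approximate identity $(e_i)$. By Theorem \ref{t20} it now suffices to show that $K=\ker\tilde{\omega}_{\mathcal A}$ possesses a bounded right approximate identity. To produce one, I would mimic the classical argument of \cite[Theorem 3.1]{GhLZ}: consider the module derivation $D=D_{1\otimes 1}$-type map, or more directly work with the canonical derivation from $\mathcal A$ into $K^{**}$ (or $(\mathcal A\widehat{\otimes}_{\mathfrak A}\mathcal A^{op})^{**}$) and use the uniform approximate innerness to extract, for the net of diagonal-type elements $(e_i\otimes e_i+I)$, an element of $(\mathcal A\widehat{\otimes}_{\mathfrak A}\mathcal A^{op})^{**}$ that serves as a right identity modulo the image of $\tilde{\omega}_{\mathcal A}$. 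The uniformity of the convergence is exactly what lets one pass from an approximating net to an honest right approximate identity, via an invertibility argument on a right-multiplication operator analogous to the one used in the proof of Proposition \ref{p3} (where $\|\mathcal R_{E_n}-\mathrm{id}\|<1$ forces invertibility and hence an exact right identity).

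Concretely, I would set up a module derivation $D:\mathcal A\to K^*{}^*=K^{**}$ whose approximate innerness, now uniform, yields a sequence $(u_n)\subset K^{**}$ with $a\cdot u_n - u_n\cdot a$ and the relevant $\tilde{\omega}_{\mathcal A}^{**}$-discrepancy tending to zero uniformly on the unit ball. The uniformity gives, for large $n$, that a suitable right-multiplication (or product) operator is close enough to the identity in operator norm to be invertible; surjectivity then produces a genuine right identity $E$ for $(K^{**},\Box)$, and the standard weak$^*$-density argument (Goldstine plus a Mazur-type convex-combination passage, exactly as deployed in the proof of Theorem \ref{t4}) converts the right identity in $K^{**}$ into a bounded right approximate identity for $K$ itself.

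The main obstacle I anticipate is the bookkeeping of compatible actions when lifting everything to the second dual and working inside $(\mathcal A\widehat{\otimes}_{\mathfrak A}\mathcal A^{op})^{**}$: one must check that the relevant maps ($\tilde{\omega}_{\mathcal A}^{**}$, the inclusion $\iota^{**}$, and the right-multiplication operators) remain $\mathfrak A$-module and $\mathcal A$-module morphisms with respect to the first Arens product, and that the commutativity hypothesis on $\mathcal A$ as an $\mathfrak A$-bimodule is genuinely used to guarantee $K$ is an ideal in $\mathcal A\widehat{\otimes}_{\mathfrak A}\mathcal A^{op}$ so that ``right approximate identity for $K$'' is even the correct target. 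The essentiality consequences of the bounded approximate identity, needed to apply Proposition \ref{p2} inside Theorem \ref{t20}, should be verified but are expected to be routine once the approximate identity is in hand.
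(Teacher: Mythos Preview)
Your proposal is correct and matches the paper's approach exactly: the paper states only that the result follows from Theorem~\ref{t20} and Proposition~\ref{p3} by an argument ``similar to \cite[Theorem 3.1]{GhLZ}'', which is precisely the strategy you outline (bounded approximate identity from Proposition~\ref{p3}, then the uniformity-plus-invertibility argument to produce a bounded right approximate identity for $K$, then Theorem~\ref{t20}). The bookkeeping concerns you flag are real but routine, and the paper does not elaborate on them either.
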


As it has been discussed in \cite{HPA}, if $\mathcal A$ is a Banach algebra and a Banach $\mathfrak A$-bimodule with compatible actions, then $\mathcal A/J$ is always a Banach $\mathcal A$-$\mathfrak A$-module. The following lemma is analogous to \cite[Lemma 2.8]{HPA} in the case of module uniform approximate amenability. The proof is similar, but we include it. 

\begin{lem} \label{l3}
Let $\mathcal A$ be a Banach algebra and a Banach $\mathfrak A$-bimodule with compatible actions. Then $\mathcal A$ is module uniformly approximately \emph{(}contractible\emph{)} amenable if and only if $\mathcal A/J$ is module uniformly approximately \emph{(}contractible\emph{)} amenable.
\end{lem}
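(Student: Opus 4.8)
The plan is to prove the equivalence by relating module derivations into dual modules over $\mathcal A$ with module derivations over $\mathcal A/J$, exploiting the fact that $J$ acts trivially on any commutative Banach $\mathcal A$-$\mathfrak A$-module. Recall that $J=J_{\mathcal A}$ is the closed ideal generated by $(a\cdot\alpha)b-a(\alpha\cdot b)$, and that $\mathcal A/J$ is always a commutative Banach $\mathcal A$-$\mathfrak A$-module. The key structural observation, already used implicitly in Theorem \ref{t2} and in the passage preceding Theorem \ref{t6}, is that if $X$ is a commutative Banach $\mathcal A$-$\mathfrak A$-module, then elements of $J$ annihilate $X$ on both sides; consequently any such $X$ (and hence $X^*$) is canonically a commutative Banach $\mathcal A/J$-$\mathfrak A$-module, and the $\mathcal A$-action factors through the quotient map $q:\mathcal A\to \mathcal A/J$.

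The forward direction is the routine half. First I would assume $\mathcal A$ is module uniformly approximately (contractible) amenable. Given a commutative Banach $\mathcal A/J$-$\mathfrak A$-module $Y$ and a module derivation $D:\mathcal A/J\to Y^*$ (respectively $D:\mathcal A/J\to Y$), I would pull back along $q$ to obtain $\widetilde D=D\circ q:\mathcal A\to Y^*$, which is again a module derivation because $q$ is an $\mathfrak A$-bimodule homomorphism and an algebra homomorphism. By hypothesis $\widetilde D$ is the uniform limit of inner module derivations $D_{y_i^*}$, and since $J$ acts trivially the implementing elements induce inner module derivations of $D$ with the same uniform estimates; the point is that $\|b\cdot\widetilde D-\widetilde D\cdot b\|$ controls the corresponding norms over the unit ball of $\mathcal A/J$ via $q$ being a quotient map, so uniformity is preserved. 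Hence $\mathcal A/J$ is module uniformly approximately (contractible) amenable.

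For the converse, I would assume $\mathcal A/J$ has the property. Let $X$ be a commutative Banach $\mathcal A$-$\mathfrak A$-module and $D:\mathcal A\to X^*$ a module derivation. Because $J$ annihilates $X$, one checks that $D(J)$ consists of elements that behave trivially, and more precisely that $D$ vanishes on $J$ in the sense needed to factor: for $a\in\mathcal A$, $c\in J$ one has $D(ac)=D(a)\cdot c+a\cdot D(c)=a\cdot D(c)$ and $a\cdot D(c)=0$ since the $\mathcal A$-action factors through $\mathcal A/J$ on $X^*$ as well. This forces $D$ to drop to a well-defined module derivation $\overline D:\mathcal A/J\to X^*$ with $\overline D\circ q=D$. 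Applying the hypothesis to $\overline D$ yields a uniformly convergent net of inner module derivations, which lift back through $q$ to express $D$ as a uniform limit of inner module derivations on $\mathcal A$, establishing module uniform approximate amenability of $\mathcal A$.

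The main obstacle I anticipate is verifying cleanly that $D$ genuinely factors through $q$, i.e. that $D$ is constant on cosets of $J$; this requires showing $D(c)=0$ for every $c$ in the generating set $\{(a\cdot\alpha)b-a(\alpha\cdot b)\}$ of $J$ and then extending to the closed ideal by continuity and the derivation identity. Using the module derivation relations $D(\alpha\cdot a)=\alpha\cdot D(a)$ and $D(a\cdot\alpha)=D(a)\cdot\alpha$ together with the compatibility conditions, a direct computation on $D((a\cdot\alpha)b-a(\alpha\cdot b))$ should collapse to $0$ because the two terms produce matching contributions in $X^*$, precisely mirroring the trivial action of $J$ on $X$; the uniform estimates are then inherited because $q$ is a quotient map, so the unit ball of $\mathcal A/J$ is the image of the unit ball of $\mathcal A$. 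The contractible case is identical with $X$ in place of $X^*$ throughout, which is why the two are handled simultaneously.
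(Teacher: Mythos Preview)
Your approach matches the paper's: the key is that $J$ annihilates every commutative Banach $\mathcal A$-$\mathfrak A$-module, so module derivations into (duals of) such modules factor through $\mathcal A/J$, and the paper treats the remaining direction as an ``easy observation.'' The only wrinkle is that your interim claim ``$a\cdot D(c)=0$ because the $\mathcal A$-action factors through $\mathcal A/J$'' is not a valid deduction---factoring only identifies the actions of $a$ and $a+J$, it does not force $a\cdot D(c)$ to vanish---but you then correctly identify the direct computation on generators $(a\cdot\alpha)b-a(\alpha\cdot b)$ (using the compatibility relations and the $\mathfrak A$-linearity of $D$) as the actual argument, which is exactly what the paper's terse ``$D$ vanishes on $J$'' encodes.
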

\begin{proof}
Let $X$ be a commutative Banach
$\mathcal{A}$-$\mathfrak{A}$-module and let $D: \mathcal A\to X^*$ be a
module derivation. By commutativity of $X$, we have $J\cdot
X=X\cdot J=0$ and so $X$ is a commutative Banach
$\mathcal A/J$-$\mathfrak{A}$-module. Also, $D$ vanishes on $J$. So, $D$
induces a module derivation $\tilde{D}: \mathcal A/J\to X^*$. Since
$\mathcal A/J$ is module uniformly approximately amenable, $\tilde{D}$ is approximately inner and so is $D$. The other direction is an easy observation.

The proof in the case of module uniform approximate contractibility is similar.
\end{proof}

\begin{cor}\label{c3}
Let $S$ be an inverse semigroup with the set of idempotents $E$. Then $\ell^1(S)$ is $\ell^1(E)$-module uniformly approximately amenable if and only if $S$ is amenable.
\end{cor}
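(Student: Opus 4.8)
The plan is to deduce Corollary~\ref{c3} by combining Theorem~\ref{t6}, Theorem~\ref{t22}, and Lemma~\ref{l3}. The statement asserts that $\ell^1(S)$ is $\ell^1(E)$-module uniformly approximately amenable precisely when $S$ is amenable. Since Theorem~\ref{t6} already characterizes module approximate amenability (and contractibility) of $\ell^1(S)$ by the amenability of $S$, it suffices to link \emph{uniform} approximate amenability to the non-uniform versions already handled. The natural bridge is Theorem~\ref{t22}, which says that for a Banach algebra that is commutative as an $\mathfrak{A}$-bimodule, module uniform approximate amenability coincides with module amenability.

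First I would reduce the problem to the quotient $\ell^1(S)/J \cong \ell^1(S/\!\approx)$. By Lemma~\ref{l3}, $\ell^1(S)$ is $\ell^1(E)$-module uniformly approximately amenable if and only if $\ell^1(S)/J$ is. The key structural fact, already recorded in the excerpt, is that $\ell^1(S/\!\approx)$ is a \emph{commutative} Banach $\ell^1(E)$-bimodule (indeed $\ell^1(E)$ acts on it essentially through the augmentation character $\phi$). This commutativity is exactly the hypothesis needed to invoke Theorem~\ref{t22}, which then gives that $\ell^1(S)/J$ is module uniformly approximately amenable if and only if it is module amenable.

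Next I would translate module amenability of the quotient back into amenability of $S$. Since $S/\!\approx$ is a discrete group, module amenability of $\ell^1(S/\!\approx)$ over $\ell^1(E)$ reduces (via \cite[Theorem 3.1]{am1} or \cite[Theorem 2.9]{HPA}) to ordinary amenability of the group $S/\!\approx$, which by \cite[Theorem 2.9]{HPA} is equivalent to amenability of $S$. Stringing these equivalences together yields the claim: uniform module approximate amenability of $\ell^1(S)$ $\Longleftrightarrow$ module amenability of $\ell^1(S/\!\approx)$ $\Longleftrightarrow$ amenability of $S$.

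The main obstacle I anticipate is verifying that the commutativity hypothesis of Theorem~\ref{t22} genuinely applies. Theorem~\ref{t22} is stated for $\mathcal{A}$ commutative as an $\mathfrak{A}$-bimodule, and $\ell^1(S)$ itself need not satisfy this; it is only after passing to the quotient $\ell^1(S)/J \cong \ell^1(S/\!\approx)$ that commutativity as an $\ell^1(E)$-bimodule holds. Thus the delicate point is the order of operations: one must apply Lemma~\ref{l3} \emph{first} to move to the quotient, and only then invoke Theorem~\ref{t22}, since the latter cannot be applied directly to $\ell^1(S)$. Everything else is routine bookkeeping once this reduction is correctly sequenced, and the amenability translation for the group $S/\!\approx$ is standard from the cited references.
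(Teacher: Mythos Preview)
Your proposal is correct and follows essentially the same route as the paper: reduce to the quotient $\ell^1(S)/J\cong\ell^1(S/\!\approx)$ via Lemma~\ref{l3}, invoke Theorem~\ref{t22} using the commutativity of $\ell^1(S/\!\approx)$ as an $\ell^1(E)$-bimodule, and then appeal to \cite[Theorem~2.9]{HPA} to translate module amenability into amenability of $S$. Your emphasis on the correct sequencing (Lemma~\ref{l3} before Theorem~\ref{t22}) is exactly the point that makes the argument work.
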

\begin{proof}
Assume that $\ell^1(S)$ is $\ell^1(E)$-module uniformly approximately am-\\enable. Then, by Lemma \ref{l3}, $\ell^1(S)/J
\cong \ell^1(S/\approx)$ is $\ell^1(E)$-module uniformly
approximately amenable. As mentioned in \cite{HPA}, we see that
$\ell^1(S/\approx)$ is a commutative $\ell^1(E)$-module. So, $\ell^1(S/\approx)$ is $\ell^1(E)$-module
amenable by Theorem \ref{t22}. Now \cite[Theorem 2.9]{HPA} implies that $S$ is
amenable. For the other direction, again Theorem 2.9 of \cite{HPA}
implies that $\ell^1(S)$ is $\ell^1(E)$-module amenable and hence
$\ell^1(E)$-module uniformly approximately amenable.
\end{proof}

\begin{rem}
There is another proof for Corollary \ref{c3}: If $\ell^1(S)$
is $\ell^1(E)$-module uniformly approximately amenable, it
is $\ell^1(E)$-module approximately amenable and so $S$ is amenable by
Theorem \ref{t6}. Conversely, if $S$ is amenable, then $\ell^1(S)$ is $\ell^1(E)$-module amenable by \cite[Theorem 3.1]{am1}. Therefore,
$\ell^1(S)$ is $\ell^1(E)$-module uniformly approximately amenable.
\end{rem}

Applying \cite[Theorem 3.4]{abe} and \cite[Theorem 2.11]{HPA}, the following corollary can be proved as Corollary \ref{c3}.

\begin{cor}\label{c5}
Let $S$ be an inverse semigroup with the set of idempotents $E$. Then $\ell^1(S)^{**}$ is $\ell^1(E)$-module uniformly approximately amenable if and only if $S/\approx$ is finite.
\end{cor}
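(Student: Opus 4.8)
The plan is to imitate the proof of Corollary \ref{c3} verbatim, with $\ell^1(S)$ replaced by its second dual $\ell^1(S)^{**}$, with the ideal $J$ replaced by $J^{\perp\perp}$, and with the amenability criterion of \cite[Theorem 2.9]{HPA} replaced by the finiteness criterion of \cite[Theorem 3.4]{abe} and \cite[Theorem 2.11]{HPA}. Everything hinges on the isomorphism $\ell^1(S)^{**}/J^{\perp\perp}\cong\ell^1(S/\approx)^{**}$ already exploited in the proof of Theorem \ref{t5}, together with Lemma \ref{l3} and Theorem \ref{t22}.

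First I would prove the forward implication. Assume that $\ell^1(S)^{**}$ is $\ell^1(E)$-module uniformly approximately amenable. Regarding $\ell^1(S)^{**}$ (with the first Arens product) as a Banach algebra and a Banach $\ell^1(E)$-bimodule with compatible actions, Lemma \ref{l3} transfers module uniform approximate amenability to the quotient $\ell^1(S)^{**}/J_{\ell^1(S)^{**}}$. Identifying $J_{\ell^1(S)^{**}}$ with $J^{\perp\perp}$, this quotient is $\ell^1(S/\approx)^{**}$. Since $\ell^1(S/\approx)$ is commutative as an $\ell^1(E)$-module, a routine duality computation shows that $\ell^1(S/\approx)^{**}$ is again commutative as an $\ell^1(E)$-module; hence Theorem \ref{t22} applies and yields that $\ell^1(S/\approx)^{**}$ is $\ell^1(E)$-module amenable. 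Then \cite[Theorem 3.4]{abe} (equivalently \cite[Theorem 2.11]{HPA}) forces $S/\approx$ to be finite.

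For the converse I would start from the finiteness of $S/\approx$ and apply \cite[Theorem 3.4]{abe} and \cite[Theorem 2.11]{HPA} directly to conclude that $\ell^1(S)^{**}$ is $\ell^1(E)$-module amenable. Module amenability trivially implies module uniform approximate amenability, since an inner module derivation $D=D_x$ is realised by the constant net $\{x\}$, for which the approximating difference vanishes identically and so converges uniformly on the unit ball of $\ell^1(S)^{**}$.

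The main obstacle is not the logical skeleton, which is short, but the verification that Lemma \ref{l3} is legitimately applicable to $\ell^1(S)^{**}$: one must check that the $\ell^1(E)$-actions on $\ell^1(S)^{**}$ remain compatible and that the module ideal $J_{\ell^1(S)^{**}}$ coincides with $J^{\perp\perp}$, so that the quotient is genuinely $\ell^1(S/\approx)^{**}$. Once these identifications are secured, confirming the commutativity of $\ell^1(S/\approx)^{**}$ as an $\ell^1(E)$-module and chaining Lemma \ref{l3}, Theorem \ref{t22}, and the cited second-dual characterizations completes the argument.
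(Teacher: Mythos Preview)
Your proposal is correct and follows exactly the route the paper indicates: it explicitly says the corollary ``can be proved as Corollary~\ref{c3}'' after substituting \cite[Theorem 3.4]{abe} and \cite[Theorem 2.11]{HPA} for \cite[Theorem 2.9]{HPA}, which is precisely the chain Lemma~\ref{l3} $\to$ Theorem~\ref{t22} $\to$ cited second-dual characterizations that you spell out. Your flagged obstacle about the identification $\ell^1(S)^{**}/J_{\ell^1(S)^{**}}\cong\ell^1(S/\approx)^{**}$ via $J^{\perp\perp}$ is the same one the paper sweeps under ``the same method applied in the proof of \cite[Theorem 3.4]{abe}'' in Theorem~\ref{t5}, so you are not missing anything the paper supplies.
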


In the following theorem we characterize the module uniform approximate contractibility of commutative Banach $\mathfrak A$-bimodules. Since the proof is similar to the proof of \cite[Theorem 4.1]{GhL}, we omit it.

\begin{thm} \label{t23}
Let $\mathcal A$ be a Banach algebra and a commutative Banach $\mathfrak A$-bimodule. Then $\mathcal A$ is module uniformly approximately contractible if and only if it is module contractible.
\end{thm}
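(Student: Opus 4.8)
The plan is to mirror the structure of the proof of Theorem~\ref{t22} (the amenability version), but now working in the contractible setting where derivations map into the module $X$ itself rather than its dual $X^*$, and where one must track \emph{uniform} convergence over the unit ball. Concretely, I would first establish the trivial implication: if $\mathcal A$ is module contractible, then every module derivation $D:\mathcal A\to X$ into a commutative Banach $\mathcal A$-$\mathfrak A$-module $X$ is an \emph{inner} module derivation, hence certainly an approximately inner one with uniform (indeed exact) convergence, so $\mathcal A$ is module uniformly approximately contractible.

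For the substantive direction, suppose $\mathcal A$ is module uniformly approximately contractible. The key structural fact I would extract first is a bounded approximate identity: the argument of Proposition~\ref{p3} goes through verbatim with $\mathcal A^{**}$ replaced by $\mathcal A$ itself (take $\mathcal A$ as a commutative Banach $\mathcal A$-$\mathfrak A$-module with usual left and zero right action, so that the identity inclusion becomes a module derivation into $\mathcal A$, not $\mathcal A^{**}$), yielding a bounded approximate identity for $\mathcal A$. Once the bounded approximate identity is in hand, I would invoke Theorem~\ref{t20}, which reduces module contractibility (equivalently module amenability, since for commutative $\mathfrak A$-bimodules these coincide by the standard Johnson-type argument) to producing a bounded right approximate identity for the diagonal ideal $K=\ker\tilde\omega_{\mathcal A}$ in $\mathcal A\widehat\otimes_{\mathfrak A}\mathcal A^{op}$.

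To manufacture that bounded right approximate identity, I would test uniform approximate contractibility against a carefully chosen module $X$. The natural candidate is $X=K$ itself (or a suitable quotient built from $K$), viewed as a commutative Banach $\mathcal A$-$\mathfrak A$-module, together with the module derivation $D_{u}(a)=a\cdot u-u\cdot a$ arising from a fixed diagonal-type element; the uniform approximate innerness of the relevant derivation then furnishes a net implementing $b\cdot n_j-n_j\cdot b\to 0$ together with $\tilde\omega_{\mathcal A}(n_j)\to 1$ \emph{uniformly}, and the uniformity is exactly what upgrades the weak-convergence/Mazur argument of Theorem~\ref{t4} into a \emph{norm-bounded} net, i.e.\ into a genuine bounded right approximate identity for $K$ rather than merely an approximate diagonal. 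Feeding this back through Theorem~\ref{t20} gives module amenability, hence module contractibility by the equivalence for commutative $\mathfrak A$-bimodules.

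The main obstacle I anticipate is precisely the point where uniformity must be converted into boundedness. In the non-uniform setting one obtains only a net satisfying the diagonal conditions in the limit, with no control on its norm; Theorem~\ref{t22} circumvents this by passing to $\mathcal A^{**}$ and using $w^*$-compactness to produce a bounded virtual diagonal. In the contractible case one cannot retreat to the bidual, so the uniform estimate $\sup_{\|b\|\le 1}\|b\cdot n_j-n_j\cdot b\|\to 0$ must be leveraged directly to bound the constructed net via the invertibility-of-right-multiplication trick already used in Proposition~\ref{p3}: the operators $\mathcal R_{n_j}$ approximate the identity on the relevant space uniformly, hence are eventually invertible with uniformly bounded inverses, and this is what keeps the resulting right approximate identity for $K$ norm-bounded. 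Verifying that this invertibility argument applies cleanly inside $\mathcal A\widehat\otimes_{\mathfrak A}\mathcal A^{op}$, and that the commutativity hypothesis on the $\mathfrak A$-bimodule structure makes the diagonal ideal and its $K$ interact correctly, is the technical heart of the proof and the step that most closely parallels \cite[Theorem 4.1]{GhL}.
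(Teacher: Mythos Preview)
Your plan contains a genuine error at its final step. You write ``Feeding this back through Theorem~\ref{t20} gives module amenability, hence module contractibility by the equivalence for commutative $\mathfrak A$-bimodules.'' No such equivalence exists. Module contractibility is strictly stronger than module amenability, even for commutative $\mathfrak A$-bimodules: take $S=\mathcal C$ the bicyclic semigroup, so that $\ell^1(S/\!\approx)\cong\ell^1(\mathbb Z)$ is a commutative Banach $\ell^1(E)$-bimodule which is module amenable (since $\mathbb Z$ is amenable) but not module contractible (since $\mathbb Z$ is infinite; see \cite[Theorem 3.7]{HPA} and Example~\ref{e1}(i)). Thus routing through Theorem~\ref{t20} can never reach the conclusion.

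The repair is already latent in your own outline, but you underuse it. When you run the Proposition~\ref{p3} argument with target $\mathcal A$ (rather than $\mathcal A^{**}$), the uniform estimate $\sup_{\|a\|\le 1}\|a e_n-a\|\to 0$ and the invertibility of $\mathcal R_{e_n}$ for large $n$ produce an \emph{actual} right identity $e\in\mathcal A$, not merely a bounded approximate one: from $e e_n=e_n$ one gets $\mathcal R_{e_n}(ae-a)=0$, hence $ae=a$ for all $a$. The same invertibility trick, applied to the module derivation $b\mapsto b\cdot(1\otimes_{\mathfrak A}1)-(1\otimes_{\mathfrak A}1)\cdot b$ into $K=\ker\tilde\omega_{\mathcal A}$, yields a \emph{genuine} right identity for $K$ (equivalently, a module diagonal in $\mathcal A\widehat\otimes_{\mathfrak A}\mathcal A$, not a virtual one in the bidual). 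That is precisely what module contractibility requires, and it is the content of \cite[Theorem 4.1]{GhL} transported to the module setting. So drop the detour through Theorem~\ref{t20} and module amenability, and instead exploit that uniformity upgrades ``bounded approximate identity'' to ``identity'' at both stages.
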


\begin{cor}\label{c4}
Let $S$ be an inverse semigroup with the set of idempotents $E$. Then $\ell^1(S)$ is $\ell^1(E)$-module uniformly approximately contractible if and only if $S/ \approx$ is finite.
\end{cor}
\begin{proof}
Suppose that $\ell^1(S)$ is $\ell^1(E)$-module uniformly approximately contractible. By Lemma \ref{l3}, $\ell^1(S)/J \cong \ell^1(S/\approx)$ is $\ell^1(E)$-module uniformly approximately contractible. Since $\ell^1(S/\approx)$ is a commutative $\ell^1(E)$-bimodule,  $\ell^1(S/\approx)$ is $\ell^1(E)$-module contractible by Theorem \ref{t23}. It follows from \cite[Theorem 3.7]{HPA} that $S/ \approx$ is finite. For the converse, again \cite[Theorem 3.7]{HPA} implies that $\ell^1(S)$ is $\ell^1(E)$-module contractible and hence $\ell^1(E)$-module uniformly approximately contractible.
\end{proof}

\vskip 0.4 true cm

\begin{center}{\textbf{Acknowledgments}}
\end{center}
The authors sincerely thank the anonymous reviewer for his
careful reading, constructive comments and fruitful suggestions
to improve the quality of the first draft.  \\ \\
\vskip 0.4 true cm



\bigskip
\bigskip


{\footnotesize \pn{\bf Hasan Pourmahmood-Aghababa}\; \\ { Department of Mathematics, University of Tabriz, Tabriz, Iran\\
{\tt Email: h\_p\_aghababa@tabrizu.ac.ir, h\_pourmahmood@yahoo.com}\\

{\footnotesize \pn{\bf Abasalt Bodaghi}\; \\ {Department of Mathematics, Garmsar Branch, Islamic Azad University, Garmsar, Iran}\\
{\tt Email: abasalt.bodaghi@gmail.com}\\
\end{document}